
\documentclass{amsart}
\usepackage{amsfonts}
\usepackage{amssymb,latexsym,amsmath}
\usepackage[draft=true]{hyperref}
\usepackage{cite}
\usepackage[left=3cm,right=3cm,top=2.5cm,bottom=2.5cm]{geometry}
\usepackage{hyperref}

\setcounter{MaxMatrixCols}{10}

\theoremstyle{plain}
\newtheorem{theorem}{Theorem}
\newtheorem{lemma}{Lemma}

\newtheorem{proposition}{Proposition}

\theoremstyle{definition}
\newtheorem{definition}{Definition}
\theoremstyle{remark}
\newtheorem{remark}{Remark}
\newtheorem{example}{Example}

\numberwithin{equation}{section}

\begin{document}
\title[Hermite-Hadamard type inequalities for the interval-valued
approximately\ldots ]{On the Hermite-Hadamard inequalities for
interval-valued co-ordinated convex functions}
\author{Dafang Zhao}
\address{College of Science, Hohai University, Nanjing, P. R. China and
School of Mathematics and Statistics, Hubei Normal University, Huangshi, P.
R. China}
\email{dafangzhao@163.com}
\author{Muhammad Aamir Ali}
\address{Jiangsu Key Laboratory for NSLSCS, School of Mathematical Sciences,
Nanjing Normal University, 210023, China}
\email{mahr.muhammad.aamir@gmail.com}
\author{Ghulam Murtaza}
\address{Department of Mathematics,Government College University Faisalabd,
Pakistan}
\email{gmnizami@gmail.com}
\keywords{\textbf{\thanks{\textbf{2010 Mathematics Subject Classification.}
26D15, 26B25, 26D10.}}Interval--valued functions, Co-ordinated convex
functions, Hermite--Hadamard inequalities.}

\begin{abstract}
In this paper, we establish Hermite-Hadamard inequality for interval-valued
convex function on the co-ordinates on the rectangle from the plane. We also
present Hermite-Hadamard inequality for the product of interval-valued
convex functions on co-ordinates. Some examples are also given to clarify
our new results.
\end{abstract}

\maketitle

\section{{\protect \large {Introduction}}}

\noindent The Hermite--Hadamard inequality discovered by C. Hermite and J.
Hadamard, (see \cite{Dragomir1}, \cite[pp. 137]{Pecaric}) is one of the most
well established inequalities in the theory of convex functions with a
geometrical interpretation and many applications. These inequalities state
that, if $f:I\rightarrow \mathbb{R}$ is a convex function on the interval $I$
of real numbers and $a,b\in I$ with $a<b$, then
\begin{equation}
f\left( \frac{a+b}{2}\right) \leq \frac{1}{b-a}\int
\limits_{a}^{b}f(x)dx\leq \frac{f\left( a\right) +f\left( b\right) }{2}.
\label{E1}
\end{equation}%
Both inequalities in (\ref{E1}) hold in the reversed direction if $f$ is
concave. We note that Hermite--Hadamard inequality may be regarded as a
refinement of the concept of convexity and it follows easily from Jensen's
inequality. Hermite--Hadamard inequality for convex functions has received
renewed attention in recent years and a remarkable variety of refinements
and generalizations have been studied.

In \cite{Dragomir}, Dragomir established the following similar inequality of
Hadamard type for the co-ordinated convex functions.

\begin{theorem}
\label{Dr1}Let $f:\Delta =[a,b]\times \lbrack c,d]\rightarrow
\mathbb{R}
$ is convex on co-ordinates $\Delta $. Then following inequalities holds:%
\begin{eqnarray}
f\left( \frac{a+b}{2},\frac{c+d}{2}\right)  &\leq &\frac{1}{2}\left[ \frac{1%
}{b-a}\int_{a}^{b}f\left( x,\frac{c+d}{2}\right) dx+\frac{1}{d-c}%
\int_{c}^{d}f\left( \frac{a+b}{2},y\right) dy\right]   \label{dr1} \\
&\leq &\frac{1}{(b-a)(d-c)}\int_{a}^{b}\int_{c}^{d}f(x,y)dydx  \notag \\
&\leq &\frac{1}{4}\left[ \frac{1}{b-a}\int_{a}^{b}f(x,c)dx+\frac{1}{b-a}%
\int_{a}^{b}f(x,d)dx\right.   \notag \\
&&\left. +\frac{1}{d-c}\int_{c}^{d}f(a,y)dy+\frac{1}{d-c}\int_{c}^{d}f(b,y)dy%
\right]   \notag \\
&\leq &\frac{f(a,c)+f(a,d)+f(b,c)+f(b,d)}{4}.  \notag
\end{eqnarray}
\end{theorem}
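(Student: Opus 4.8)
The plan is to reduce the two-dimensional assertion to repeated applications of the classical one-variable Hermite--Hadamard inequality (\ref{E1}), using the defining property of co-ordinated convexity: for each fixed $x\in[a,b]$ the section $y\mapsto f(x,y)$ is convex on $[c,d]$, and for each fixed $y\in[c,d]$ the section $x\mapsto f(x,y)$ is convex on $[a,b]$.

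First I would apply (\ref{E1}) to $y\mapsto f(x,y)$ on $[c,d]$ for a fixed $x$, giving
\[
f\!\left(x,\tfrac{c+d}{2}\right)\;\le\;\frac{1}{d-c}\int_c^d f(x,y)\,dy\;\le\;\frac{f(x,c)+f(x,d)}{2},
\]
and then integrate this chain with respect to $x$ over $[a,b]$ and divide by $b-a$, which — after a use of Fubini's theorem on the middle term — bounds the double average $\frac{1}{(b-a)(d-c)}\int_a^b\int_c^d f(x,y)\,dy\,dx$ below by $\frac{1}{b-a}\int_a^b f\!\left(x,\frac{c+d}{2}\right)dx$ and above by $\frac{1}{2}\bigl[\frac{1}{b-a}\int_a^b f(x,c)\,dx+\frac{1}{b-a}\int_a^b f(x,d)\,dx\bigr]$. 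The symmetric argument, applying (\ref{E1}) to $x\mapsto f(x,y)$ on $[a,b]$ and then integrating in $y$, gives the companion chain with the two variables interchanged.

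Next I would average the two chains. Averaging the two lower estimates for the double average produces the second inequality of the theorem, and averaging the two upper estimates produces the third. For the leftmost inequality I would invoke (\ref{E1}) once more, applied to the one-variable convex functions $x\mapsto f\!\left(x,\frac{c+d}{2}\right)$ on $[a,b]$ and $y\mapsto f\!\left(\frac{a+b}{2},y\right)$ on $[c,d]$; each bounds $f\!\left(\frac{a+b}{2},\frac{c+d}{2}\right)$ below by the corresponding single average, and averaging the two yields the first inequality. For the rightmost inequality I would apply the right-hand half of (\ref{E1}) to the four edge functions $x\mapsto f(x,c)$, $x\mapsto f(x,d)$ on $[a,b]$ and $y\mapsto f(a,y)$, $y\mapsto f(b,y)$ on $[c,d]$; summing the four resulting estimates and dividing by $4$ regroups the vertex values into $\frac{f(a,c)+f(a,d)+f(b,c)+f(b,d)}{4}$.

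I expect no serious analytic obstacle here; the work is essentially bookkeeping — tracking which average is being formed at each stage and checking that the constants combine correctly. The one point deserving a line of justification is the legitimacy of integrating the chains term by term and swapping the order of integration: this is fine because a convex function on a compact interval is continuous on its interior and bounded, hence Lebesgue integrable, so all the inner integrals $x\mapsto\int_c^d f(x,y)\,dy$ and $y\mapsto\int_a^b f(x,y)\,dx$ are well defined and Fubini's theorem applies.
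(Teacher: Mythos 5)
Your proof is correct and follows the same route the paper itself uses: the paper quotes this theorem from Dragomir without proof, but its proof of the interval-valued analogue (Theorem \ref{mt1}) is precisely your scheme — apply the one-variable Hermite--Hadamard inequality (\ref{E1}) to the sections $y\mapsto f(x,y)$ and $x\mapsto f(x,y)$, integrate and average the two resulting chains for the middle inequalities, and use (\ref{E1}) once more on the midline sections and on the four edge functions for the outer ones. The bookkeeping and the integrability remarks are all in order.
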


For more results related to (\ref{dr1}) we refer (\cite{Almori},\cite{AL},%
\cite{MZ}) and references therein.

On the other hand, interval analysis is a particular case of set--valued
analysis which is the study of sets in the spirit of mathematical analysis
and general topology. It was introduced as an attempt to handle interval
uncertainty that appears in many mathematical or computer models of some
deterministic real--world phenomena. An old example of interval enclosure is
Archimede's method which is related to compute of the circumference of a
circle. In 1966, the first book related to interval analysis was given by
Moore who is known as the first user of intervals in computational
mathematics, see \cite{Moore}. After his book, several scientists started to
investigate theory and application of interval arithmetic. Nowadays, because
of its applications, interval analysis is a useful tool in various area
which are interested intensely in uncertain data. You can see applications
in computer graphics, experimental and computational physics, error
analysis, robotics and many others.

What's more, several important inequalities (Hermite--Hadamard, Ostrowski,
etc.) have been studied for the interval--valued functions in recent years.
In \cite{CAno,Cano1}, Chalco--Cano et al. obtained Ostrowski type
inequalities for interval--valued functions by using Hukuhara derivative for
interval--valued functions. In \cite{flo2}, Rom\'{a}n-Flores et al.
established Minkowski and Beckenbach's inequalities for interval--valued
functions. For the others, please see \cite{flo2,costa,costa2,flo,flo3}.
However, inequalities were studied for more general set--valued maps. For
example, in \cite{Sado}, Sadowska gave the Hermite--Hadamard inequality. For
the other studies, you can see \cite{Mitroi,niko1}.\newline

\section{Preliminaries and Known Results}

In this section we recalling some basics definitions, results, notions and
properties, which are used throughout the paper. We denote $%
\mathbb{R}
_{\mathcal{I}}^{+}$ the family of all positive intervals of $%
\mathbb{R}
$. The Hausdorff distance between $[\underline{X},\overline{X}]$ and $[%
\underline{Y},\overline{Y}]$ is defined as%
\begin{equation*}
d([\underline{X},\overline{X}],[\underline{Y},\overline{Y}])=\max \left \{
\left \vert \underline{X}-\underline{Y}\right \vert ,\overline{X}-\overline{Y%
}\right \} .
\end{equation*}
The $(%
\mathbb{R}
_{\mathcal{I}},d)$ is a complete metric space. For more details and basic
notations on interval-valued functions see (\cite{Moore2}, \cite{Zhao}).

It is remarkable that Moore \cite{Moore} introduced the Riemann integral for
the interval-valued functions. The set of all Riemann integrable
interval-valued functions and real-valued functions on $[a,b]$ are denoted
by $\mathcal{IR}_{([a,b])}~$and $\mathcal{R}_{([a,b])},$ respectively. The
following theorem gives relation between $(IR)$--integrable and Riemann
integrable ($R$--integrable) (see \cite{Moore2}, pp. 131):

\begin{theorem}
\label{t1} Let $F:\left[ a,b\right] \rightarrow
\mathbb{R}
_{\mathcal{I}}$ be an interval--valued function such that $F(t)=\left[
\underline{F}(t),\overline{F}(t)\right] .$ $F\in \mathcal{IR}_{(\left[ a,b%
\right] )}$ if and only if \underline{$F$}$(t)$, $\overline{F}(t)\in
\mathcal{R}_{(\left[ a,b\right] )}$ and
\begin{equation*}
(IR)\int \limits_{a}^{b}F(t)dt=\left[ (R)\int \limits_{a}^{b}\underline{F}%
(t)dt,(R)\int \limits_{a}^{b}\overline{F}(t)dt\right] .
\end{equation*}
\end{theorem}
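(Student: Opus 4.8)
The plan is to unwind the definition of the $(IR)$-integral as a limit of interval Riemann sums in the Hausdorff metric $d$ and to reduce the whole statement to the corresponding classical fact for the two real endpoint functions $\underline{F}$ and $\overline{F}$. First I would recall that $(IR)\int_a^b F(t)\,dt$ is, by definition, the unique interval $A\in\mathbb{R}_{\mathcal I}$ (when it exists) such that for every $\varepsilon>0$ there is $\delta>0$ with $d\bigl(S(F,P,\xi),A\bigr)<\varepsilon$ for every tagged partition $P=\{a=t_0<t_1<\cdots<t_n=b\}$ of mesh less than $\delta$ with tags $\xi_i\in[t_{i-1},t_i]$, where $S(F,P,\xi)=\sum_{i=1}^{n}F(\xi_i)(t_i-t_{i-1})$ is formed using interval arithmetic.

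The key observation is that each increment $t_i-t_{i-1}$ is a \emph{positive} scalar, so $F(\xi_i)(t_i-t_{i-1})=[\underline{F}(\xi_i)(t_i-t_{i-1}),\overline{F}(\xi_i)(t_i-t_{i-1})]$, and since the Minkowski sum of intervals adds endpoints,
\[
S(F,P,\xi)=\Bigl[\,\sum_{i=1}^{n}\underline{F}(\xi_i)(t_i-t_{i-1}),\ \sum_{i=1}^{n}\overline{F}(\xi_i)(t_i-t_{i-1})\,\Bigr]=\bigl[S(\underline{F},P,\xi),\,S(\overline{F},P,\xi)\bigr],
\]
where $S(\underline{F},P,\xi)$ and $S(\overline{F},P,\xi)$ are ordinary real Riemann sums. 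Combining this with the given formula for $d$ yields $d\bigl(S(F,P,\xi),[A_1,A_2]\bigr)=\max\{\,|S(\underline{F},P,\xi)-A_1|,\ |S(\overline{F},P,\xi)-A_2|\,\}$ for any interval $[A_1,A_2]$.

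From here both implications are immediate. If $\underline{F},\overline{F}\in\mathcal{R}_{([a,b])}$, put $A_1=(R)\int_a^b\underline{F}(t)\,dt$ and $A_2=(R)\int_a^b\overline{F}(t)\,dt$; since $\underline{F}\le\overline{F}$ pointwise, monotonicity of the Riemann integral gives $A_1\le A_2$, so $[A_1,A_2]\in\mathbb{R}_{\mathcal I}$, and given $\varepsilon>0$ one picks a single $\delta$ that works simultaneously for both real Riemann sums, whence the displayed maximum is $<\varepsilon$; this shows $F\in\mathcal{IR}_{([a,b])}$ with the asserted value. Conversely, if $F\in\mathcal{IR}_{([a,b])}$ with $(IR)\int_a^b F(t)\,dt=[A_1,A_2]$, then for every tagged partition of mesh $<\delta$ we separately get $|S(\underline{F},P,\xi)-A_1|<\varepsilon$ and $|S(\overline{F},P,\xi)-A_2|<\varepsilon$, so $\underline{F},\overline{F}$ are Riemann integrable with integrals $A_1$ and $A_2$ respectively, which is precisely the claimed decomposition.

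I expect the only point requiring genuine care — rather than a true obstacle — is the bookkeeping around the definition: checking that scalar multiplication by the positive lengths $t_i-t_{i-1}$ preserves the order of endpoints and that the Minkowski sum acts coordinatewise on endpoints, so that the interval Riemann sum is literally the pair of real Riemann sums. Once that identity is in place, the equivalence and the formula for the integral follow from the elementary fact that a sequence of intervals converges in $(\mathbb{R}_{\mathcal I},d)$ if and only if both endpoint sequences converge in $\mathbb{R}$.
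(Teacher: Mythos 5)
Your argument is correct: reducing the interval Riemann sum to the pair of real Riemann sums via positive scalar multiplication and Minkowski addition, and then using that convergence in the metric $d$ is exactly simultaneous convergence of both endpoints, is the standard proof of this fact. Note that the paper itself offers no proof here -- Theorem \ref{t1} is quoted from Moore's book as a known result -- so there is nothing to compare against; the only point worth flagging is that the paper's displayed formula for $d$ omits the absolute value on $\overline{X}-\overline{Y}$ (a typo), and your proof correctly uses $\max\{|\underline{X}-\underline{Y}|,|\overline{X}-\overline{Y}|\}$.
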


\noindent In \cite{Zhao,zhao2}, Zhao et al. introduced a kind of convex
interval--valued function as follows:

\begin{definition}
\label{d111} Let $h:[c,d]\rightarrow
\mathbb{R}
$ be a non--negative function, $(0,1)\subseteq \lbrack c,d]$ and $h\neq 0.$
We say that $F:[a,b]\rightarrow
\mathbb{R}
_{\mathcal{I}}^{+}$ is a $h$--convex interval--valued function, if for all $%
x,y\in \lbrack a,b]$ and $t\in (0,1),$ we have
\begin{equation}
h(t)F(x)+h(1-t)F(y)\subseteq F(tx+(1-t)y).  \label{i1}
\end{equation}%
With $SX(h,[a,b],%
\mathbb{R}
_{\mathcal{I}}^{+})$ will show the set of all $h$--convex interval--valued
functions.
\end{definition}

\noindent The usual notion of convex interval--valued function corresponds
to relation (\ref{i1}) with $h(t)=t,$ see \cite{Sado}. Also, if we take $%
h(t)=t^{s}$ in (\ref{i1}), then Definition \ref{d111} gives the other convex
interval--valued function defined by Breckner, see \cite{Breckner}.

\noindent Otherwise, Zhao et al. obtained the following Hermite--Hadamard
inequality for interval--valued functions by using $h$--convex:

\begin{theorem}
\label{zhtm1} \cite{Zhao} Let $F:[a,b]\rightarrow
\mathbb{R}
_{\mathcal{I}}^{+}$ be an interval--valued function such that $F(t)=[%
\underline{F}(t),\overline{F}(t)]$ and $F\in \mathcal{IR}_{(\left[ a,b\right]
)},h:[0,1]\rightarrow
\mathbb{R}
$ be a non--negative function and $h\left( \frac{1}{2}\right) \neq 0.$ If $%
F\in SX(h,[a,b],%
\mathbb{R}
_{\mathcal{I}}^{+})$, then
\begin{equation}
\frac{1}{2h\left( \frac{1}{2}\right) }F\left( \frac{a+b}{2}\right) \supseteq
\frac{1}{b-a}(IR)\int \limits_{a}^{b}F(x)dx\supseteq \lbrack F(a)+F(b)]\int
\limits_{0}^{1}h(t)dt.  \label{k1}
\end{equation}
\end{theorem}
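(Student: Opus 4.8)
The plan is to reduce the interval inclusion $(\ref{k1})$ to two ordinary scalar statements about the endpoint functions $\underline{F}$ and $\overline{F}$, and then to reassemble everything using Theorem \ref{t1}, which tells us that $(IR)\int_a^b F$ is exactly the interval whose endpoints are $(R)\int_a^b\underline{F}$ and $(R)\int_a^b\overline{F}$. Equivalently, one can run the whole argument directly at the interval level using only the defining inclusion $(\ref{i1})$, monotonicity of the interval integral, and an affine change of variables; I will describe this route, since it is self-contained.

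First I would unpack the hypothesis $F\in SX(h,[a,b],\mathbb{R}_{\mathcal{I}}^{+})$. Because $h\ge 0$, for fixed intervals one has $h(t)[\underline{F}(x),\overline{F}(x)]+h(1-t)[\underline{F}(y),\overline{F}(y)]=[\,h(t)\underline{F}(x)+h(1-t)\underline{F}(y),\ h(t)\overline{F}(x)+h(1-t)\overline{F}(y)\,]$, so $(\ref{i1})$ is equivalent to the pair of scalar inequalities $\underline{F}(tx+(1-t)y)\le h(t)\underline{F}(x)+h(1-t)\underline{F}(y)$ and $h(t)\overline{F}(x)+h(1-t)\overline{F}(y)\le \overline{F}(tx+(1-t)y)$; that is, $\underline{F}$ is $h$-convex and $\overline{F}$ is $h$-concave on $[a,b]$ in the classical sense. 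I would also record two elementary facts, both immediate from Theorem \ref{t1}: integration of interval-valued functions is monotone with respect to $\subseteq$, and it is invariant under the affine substitution $u=sa+(1-s)b$, so that $\int_0^1 F(sa+(1-s)b)\,ds=\frac{1}{b-a}(IR)\int_a^b F(u)\,du$, with the analogous identity for $u=sb+(1-s)a$.

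For the left inclusion of $(\ref{k1})$ I would take $t=\tfrac12$ in $(\ref{i1})$ and apply it to the points $x=sa+(1-s)b$ and $y=sb+(1-s)a$, whose midpoint is $\tfrac{a+b}{2}$ for every $s\in[0,1]$, obtaining $h(\tfrac12)\big[F(sa+(1-s)b)+F(sb+(1-s)a)\big]\subseteq F(\tfrac{a+b}{2})$. Integrating this inclusion over $s\in[0,1]$, pulling the positive scalar $h(\tfrac12)$ out of the integral, and applying the substitution above to each of the two integrals gives $2h(\tfrac12)\cdot\frac{1}{b-a}(IR)\int_a^b F(x)\,dx\subseteq F(\tfrac{a+b}{2})$; dividing by the positive number $2h(\tfrac12)$ (which merely rescales endpoints) yields the first half of $(\ref{k1})$. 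For the right inclusion I would instead apply $(\ref{i1})$ with $x=a$, $y=b$, getting $h(t)F(a)+h(1-t)F(b)\subseteq F(ta+(1-t)b)$, and integrate over $t\in[0,1]$; since scalar weights factor through fixed intervals and $\int_0^1 h(1-t)\,dt=\int_0^1 h(t)\,dt$, the left side integrates to $[F(a)+F(b)]\int_0^1 h(t)\,dt$, while by the substitution the right side equals $\frac{1}{b-a}(IR)\int_a^b F(x)\,dx$, which is exactly the second half of $(\ref{k1})$.

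The computations are routine, so the genuine care is in the bookkeeping of inclusion directions: when one checks everything endpoint-by-endpoint via Theorem \ref{t1}, the upper endpoint inequality reverses because $\overline{F}$ is $h$-concave rather than $h$-convex, and the two halves must fit together consistently with $\subseteq$. The one structural point worth isolating is that integrating an inclusion of interval-valued functions again produces an inclusion and that the affine change of variables is legitimate for $(IR)$-integrals; both follow at once from Theorem \ref{t1} and the corresponding scalar facts, so this is a mild obstacle rather than a real one.
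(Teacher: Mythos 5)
Your proposal is correct. Note that the paper itself offers no proof of this statement: Theorem \ref{zhtm1} is imported verbatim from \cite{Zhao}, so there is no in-paper argument to compare against. Your route is the standard one for this result (and essentially the one in the cited source): the inclusion $(\ref{i1})$ splits via Theorem \ref{t1} into classical $h$-convexity of $\underline{F}$ and $h$-concavity of $\overline{F}$, the left half of $(\ref{k1})$ comes from $t=\tfrac12$ applied to the symmetric pair $sa+(1-s)b$, $sb+(1-s)a$ followed by integration in $s$, and the right half from integrating $h(t)F(a)+h(1-t)F(b)\subseteq F(ta+(1-t)b)$ over $t\in[0,1]$. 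All the supporting steps you flag (monotonicity of the interval integral under $\subseteq$, additivity, scalar factoring with $h\ge 0$, the affine substitution, and positivity of $2h(\tfrac12)$) do reduce to endpoint computations via Theorem \ref{t1} exactly as you say; the only unstated hypothesis you silently use is integrability of $h$, which is already implicit in the statement since $\int_0^1 h(t)\,dt$ appears in the conclusion.
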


\begin{remark}
\noindent (i) If $h(t)=t,$ then (\ref{k1}) reduces to the following result:
\begin{equation}
F\left( \frac{a+b}{2}\right) \supseteq \frac{1}{b-a}(IR)\int%
\limits_{a}^{b}F(x)dx\supseteq \frac{F(a)+F(b)}{2},  \label{k2}
\end{equation}%
\noindent which is obtained by \cite{Sado}.\newline

\noindent (ii) If $h(t)=t^{s},$ then (\ref{k1}) reduces to the following
result:
\begin{equation*}
2^{s-1}F\left( \frac{a+b}{2}\right) \supseteq \frac{1}{b-a}(IR)\int
\limits_{a}^{b}F(x)dx\supseteq \frac{F(a)+F(b)}{s+1},
\end{equation*}%
which is obtained by \cite{Gomez}.
\end{remark}

\begin{theorem}
\label{zhtm2}Let $F,G:[a,b]\rightarrow
\mathbb{R}
_{\mathcal{I}}^{+}$ be two interval-valued functions such that $F(t)=[%
\underline{F}(t),\overline{F}(t)]$ and $G(t)=[\underline{G}(t),\overline{G}%
(t)],$ where $F,G\in \mathcal{IR}_{(\left[ a,b\right] )},h_{1},h_{2}:[0,1]%
\rightarrow
\mathbb{R}
$ are two non--negative functions and $h_{1}\left( \frac{1}{2}\right)
h_{2}\left( \frac{1}{2}\right) \neq 0.$ If $F,G\in SX(h,[a,b],%
\mathbb{R}
_{\mathcal{I}}^{+})$, then
\begin{eqnarray}
&&\frac{1}{2h_{1}\left( \frac{1}{2}\right) h_{2}\left( \frac{1}{2}\right) }%
F\left( \frac{a+b}{2}\right) G\left( \frac{a+b}{2}\right)  \label{k3} \\
&\supseteq &\frac{1}{b-a}(IR)\int
\limits_{a}^{b}F(x)G(x)dx+M(a,b)\int_{0}^{1}h_{1}(t)h_{2}(1-t)dt  \notag \\
&&+N(a,b)\int_{0}^{1}h_{1}(t)h_{2}(t)dt  \notag
\end{eqnarray}%
\noindent and%
\begin{equation}
\frac{1}{b-a}(IR)\int_{a}^{b}F(x)G(x)dx\supseteq
M(a,b)\int_{0}^{1}h_{1}(t)h_{2}(t)dt+N(a,b)\int_{0}^{1}h_{1}(t)h_{2}(1-t)dt,
\label{k4}
\end{equation}%
where%
\begin{equation*}
M(a,b)=F(a)G(a)+F(b)G(b)\text{ and }N(a,b)=F(a)G(b)+F(b)G(a).
\end{equation*}
\end{theorem}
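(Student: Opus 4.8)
The plan is to prove the two inclusions \eqref{k3} and \eqref{k4} separately, working directly with Minkowski sums and products in $\mathbb{R}_{\mathcal{I}}^{+}$ and using Theorem \ref{t1} to convert interval integrals into ordinary Riemann integrals. Throughout I will use three elementary facts about positive intervals: (i) if $A\supseteq A'$ and $B\supseteq B'$ then $A+B\supseteq A'+B'$ and $AB\supseteq A'B'$; (ii) for positive intervals $A,B,C,D$ and nonnegative scalars $\alpha,\beta,\gamma,\delta$ the \emph{exact} distributive identity
\[
(\alpha A+\beta B)(\gamma C+\delta D)=\alpha\gamma\,AC+\alpha\delta\,AD+\beta\gamma\,BC+\beta\delta\,BD
\]
holds (this is false for general intervals, but one verifies it here by comparing lower and upper endpoints, using that all endpoints are positive); and (iii) if $H(t)\supseteq K(t)$ for every $t$ then $(IR)\int H\supseteq (IR)\int K$, which is immediate from Theorem \ref{t1}. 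I also read the hypothesis as $F\in SX(h_1,[a,b],\mathbb{R}_{\mathcal{I}}^{+})$ and $G\in SX(h_2,[a,b],\mathbb{R}_{\mathcal{I}}^{+})$.

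For \eqref{k4}: fix $t\in(0,1)$ and set $x=ta+(1-t)b$. The $h_1$-convexity of $F$ and the $h_2$-convexity of $G$ give $F(x)\supseteq h_1(t)F(a)+h_1(1-t)F(b)$ and $G(x)\supseteq h_2(t)G(a)+h_2(1-t)G(b)$; multiplying these by fact (i) and expanding by fact (ii) yields
\begin{align*}
F(x)G(x)\supseteq\ & h_1(t)h_2(t)\,F(a)G(a)+h_1(t)h_2(1-t)\,F(a)G(b)\\
&+h_1(1-t)h_2(t)\,F(b)G(a)+h_1(1-t)h_2(1-t)\,F(b)G(b).
\end{align*}
Integrating in $t$ over $[0,1]$, the substitution $x=ta+(1-t)b$ together with Theorem \ref{t1} turns the left-hand side into $\frac{1}{b-a}(IR)\int_a^b F(x)G(x)\,dx$, while on the right the change of variable $s=1-t$ gives $\int_0^1 h_1(1-t)h_2(1-t)\,dt=\int_0^1 h_1(t)h_2(t)\,dt$ and $\int_0^1 h_1(1-t)h_2(t)\,dt=\int_0^1 h_1(t)h_2(1-t)\,dt$, so the four terms collapse to $M(a,b)\int_0^1 h_1(t)h_2(t)\,dt+N(a,b)\int_0^1 h_1(t)h_2(1-t)\,dt$. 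Combined with fact (iii) this is exactly \eqref{k4}.

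For \eqref{k3}: write $u=ta+(1-t)b$ and $v=(1-t)a+tb$, so $\tfrac{a+b}{2}=\tfrac12u+\tfrac12v$. The $h_1$-convexity of $F$ at this midpoint gives $F\!\left(\tfrac{a+b}{2}\right)\supseteq h_1\!\left(\tfrac12\right)\bigl(F(u)+F(v)\bigr)$, and similarly for $G$; multiplying and expanding (facts (i), (ii)) yields
\[
\frac{1}{h_1\!\left(\tfrac12\right)h_2\!\left(\tfrac12\right)}F\!\left(\tfrac{a+b}{2}\right)G\!\left(\tfrac{a+b}{2}\right)\supseteq F(u)G(u)+F(v)G(v)+F(u)G(v)+F(v)G(u).
\]
Integrating over $t\in[0,1]$, the first two terms each contribute $\frac{1}{b-a}(IR)\int_a^b FG$. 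For the cross terms I invoke $h$-convexity again: $F(u)\supseteq h_1(t)F(a)+h_1(1-t)F(b)$ and $G(v)\supseteq h_2(1-t)G(a)+h_2(t)G(b)$ (and symmetrically for $F(v)G(u)$); multiplying, expanding, integrating, and using the same $s\mapsto 1-t$ symmetrizations as above shows that $\int_0^1 F(u)G(v)\,dt$ and $\int_0^1 F(v)G(u)\,dt$ each contain $M(a,b)\int_0^1 h_1(t)h_2(1-t)\,dt+N(a,b)\int_0^1 h_1(t)h_2(t)\,dt$. Summing the four pieces and dividing the resulting inclusion by the positive scalar $2$ gives \eqref{k3}.

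The genuinely delicate point is the bookkeeping of interval arithmetic: one must keep the direction of every $\supseteq$ correct when multiplying two inclusions and when integrating, and must justify the distributive identity in fact (ii), which is the single place where positivity of the intervals is essential and where the naive manipulation would be incorrect for general intervals. The remaining ingredients — the affine substitutions in the integrals, the $s\mapsto 1-t$ symmetrizations of the $h$-integrals, and the passage between interval and real integrals via Theorem \ref{t1} — are routine.
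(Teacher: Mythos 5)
Your argument is correct: the pointwise inclusion obtained from $h_1$- and $h_2$-convexity, the exact distributivity of products of Minkowski combinations of \emph{positive} intervals (the one step where positivity genuinely matters), the $t\mapsto 1-t$ symmetrization, and the endpointwise integration via Theorem \ref{t1} together give both \eqref{k3} and \eqref{k4}, and you correctly repair the statement's typo by reading $F\in SX(h_1,[a,b],\mathbb{R}_{\mathcal{I}}^{+})$ and $G\in SX(h_2,[a,b],\mathbb{R}_{\mathcal{I}}^{+})$. The paper states this theorem as a recalled known result and gives no proof, but your derivation is essentially the standard one from the cited source, so there is nothing further to compare.
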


\begin{remark}
If $h(t)=t,$ the (\ref{k3}) reduces to the following result:
\begin{equation}
\frac{1}{b-a} (IR)\int_{a}^{b}F(x)G(x)dx\supseteq \frac{1}{3}M(a,b)+\frac{1}{%
6}N(a,b).  \label{k5}
\end{equation}
\end{remark}

\begin{remark}
If $h(t)=t,$ then (\ref{k4}) reduces to the following result:
\begin{equation}
2F\left( \frac{a+b}{2}\right) G\left( \frac{a+b}{2}\right) \supseteq \frac{1%
}{b-a}(IR)\int_{a}^{b}F(x)G(x)dx+\frac{1}{6}M(a,b)+\frac{1}{3}N(a,b).
\label{k6}
\end{equation}
\end{remark}

\section{Interval-valued double integral}

A set of numbers $\{t_{i-1},\xi _{i},t_{i}\}_{i=1}^{m}$ is called tagged
partition $P_{1}$ of $[a,b]$ if%
\begin{equation*}
P_{1}:a=t_{0}<t_{1}<\ldots <t_{n}=b
\end{equation*}%
and if $t_{i-1}\leq \xi _{i}\leq t_{i}$ for all $i=1,2,3,\ldots ,m$.
Moreover if we have $\Delta t_{i}=t_{i}-t_{i-1}$, then $P_{1}$ is said to be
$\delta -$fine if $\Delta t_{i}<\delta $ for all $i.$ Let $\mathcal{P}%
(\delta ,[a,b])$ denote the set of all $\delta -$fine partitions of $[a,b].$
If $\{t_{i-1},\xi _{i},t_{i}\}_{i=1}^{m}$ is a $\delta -$fine $P_{1}$ of $%
[a,b]$ and if $\{s_{j-1},\eta _{j},t_{j}\}_{j=1}^{n}$ is $\delta -$fine $%
P_{2}~$of $[c,d],$ then rectangles%
\begin{equation*}
\Delta _{i,j}=[t_{i-1},t_{i}]\times \lbrack s_{j-1},s_{j}]
\end{equation*}%
partition the rectangle $\Delta =[a,b]\times \lbrack c,d]$ and the points $%
(\xi _{i},\eta _{j})$ are inside the rectangles $[t_{i-1},t_{i}]\times
\lbrack s_{j-1},s_{j}].$ Further, by $\mathcal{P}$ ($\delta ,\Delta $) we
denote the set of all $\delta -$fine partitions $P$ of $\Delta $ with $%
P_{1}\times P_{2},$ where $P_{1}\in \mathcal{P}$($\delta ,[a,b]$) and $%
P_{2}\in \mathcal{P}(\delta ,[c,d]).$ Let $\Delta A_{i,~j}$ be the area of
rectangle $\Delta _{i,j}.$ In each rectangle $\Delta _{i,j}$, where 1$\leq
i\leq m,~1\leq j\leq n$, choose arbitrary $(\xi _{i},\eta _{j})$ and get%
\begin{equation*}
S(F,P,\delta ,\Delta )=\sum \limits_{i=1}^{m}\sum_{j=1}^{n}F(\xi _{i},\eta
_{j})\Delta A_{i,~j}.
\end{equation*}%
We call $S(F,P,\delta ,\Delta )$ is integral sum of $F$ associated with $%
P\in \mathcal{P}(\delta ,\Delta ).$

Now we recall the concept of interval-valued double integral given by Zhao
et al. in \cite{zhao3}.

\begin{theorem}
\cite{zhao3}Let $F:\Delta \rightarrow
\mathbb{R}
_{\mathcal{I}}.$ Then $F$ is called $ID-$integrable on $\Delta $ with $ID-$%
integral $U=(ID)\iint \limits_{\Delta }F(t,s)dA,$ if for any $\epsilon >0$
there exist $\delta >0$ such that
\begin{equation*}
d(S(F,P,\delta ,\Delta ))<\epsilon
\end{equation*}

for any $P\in \mathcal{P}$($\delta ,\Delta $). The collection of all $ID-$%
integrable functions on $\Delta $ will be denoted by $\mathcal{I}D_{(\Delta
)}.$
\end{theorem}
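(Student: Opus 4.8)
\noindent The statement above is, in effect, the \emph{definition} of the interval-valued double (Riemann) integral from \cite{zhao3}: $F$ is $ID$-integrable on $\Delta$ with value $U\in\mathbb{R}_{\mathcal{I}}$ precisely when the integral sums $S(F,P,\delta,\Delta)$ approach $U$ in the Hausdorff metric $d$ as the mesh of the $\delta$-fine tagged partition $P\in\mathcal{P}(\delta,\Delta)$ is forced to shrink, the estimate being required uniformly over all admissible choices of tags $(\xi_i,\eta_j)$. (One should read the displayed inequality as $d(S(F,P,\delta,\Delta),U)<\epsilon$, the limit $U$ having been inadvertently suppressed.) Consequently the only mathematical content that genuinely calls for an argument is that the symbol $(ID)\iint_\Delta F(t,s)\,dA$ is \emph{unambiguous}, i.e.\ that $U$, when it exists, is unique; beyond that, the condition is a straight definition and there is no further content to establish.

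\noindent To prove uniqueness I would argue as follows. Suppose $U_1,U_2\in\mathbb{R}_{\mathcal{I}}$ both satisfy the defining condition. Fix $\epsilon>0$, choose $\delta_1,\delta_2>0$ with $d(S(F,P,\delta_1,\Delta),U_1)<\epsilon$ for all $P\in\mathcal{P}(\delta_1,\Delta)$ and $d(S(F,P,\delta_2,\Delta),U_2)<\epsilon$ for all $P\in\mathcal{P}(\delta_2,\Delta)$, and set $\delta=\min\{\delta_1,\delta_2\}$. The set $\mathcal{P}(\delta,\Delta)$ is nonempty — take, say, uniform partitions of $[a,b]$ and of $[c,d]$ with enough subintervals that every $\Delta t_i<\delta$ and every $\Delta s_j<\delta$, together with any tags inside the resulting rectangles — so we may fix some $P\in\mathcal{P}(\delta,\Delta)\subseteq\mathcal{P}(\delta_1,\Delta)\cap\mathcal{P}(\delta_2,\Delta)$. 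Since $(\mathbb{R}_{\mathcal{I}},d)$ is a (complete) metric space, $d$ separates points and obeys the triangle inequality, so
\begin{equation*}
d(U_1,U_2)\le d\big(U_1,S(F,P,\delta,\Delta)\big)+d\big(S(F,P,\delta,\Delta),U_2\big)<2\epsilon .
\end{equation*}
As $\epsilon>0$ is arbitrary, $d(U_1,U_2)=0$, hence $U_1=U_2$. By the same $\epsilon/2$-type reasoning together with completeness of $(\mathbb{R}_{\mathcal{I}},d)$ one also obtains the equivalent Cauchy criterion — $F\in\mathcal{I}D_{(\Delta)}$ iff for every $\epsilon>0$ there is $\delta>0$ with $d\big(S(F,P,\delta,\Delta),S(F,Q,\delta,\Delta)\big)<\epsilon$ for all $P,Q\in\mathcal{P}(\delta,\Delta)$ — where the candidate $U$ is produced by applying completeness to the sums taken along a sequence of meshes tending to $0$.

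\noindent The only points needing a moment's care are the non-emptiness of $\mathcal{P}(\delta,\Delta)$ (immediate from the Archimedean property) and the fact that $S(F,P,\delta,\Delta)$ depends on the tags; the latter is not an obstacle since the defining bound is demanded uniformly over \emph{all} tag choices. I therefore do not anticipate any real difficulty here. If instead the intended assertion is the companion characterization — that $F\in\mathcal{I}D_{(\Delta)}$ exactly when $\underline{F},\overline{F}$ are double Riemann integrable on $\Delta$, with $(ID)\iint_\Delta F(t,s)\,dA=[(RD)\iint_\Delta\underline{F}(t,s)\,dA,\ (RD)\iint_\Delta\overline{F}(t,s)\,dA]$, in analogy with Theorem~\ref{t1} — then the plan would be to observe $S(F,P,\delta,\Delta)=[\,S(\underline{F},P,\delta,\Delta),\,S(\overline{F},P,\delta,\Delta)\,]$ (each $F(\xi_i,\eta_j)=[\underline{F}(\xi_i,\eta_j),\overline{F}(\xi_i,\eta_j)]$ and the weights $\Delta A_{i,j}$ are nonnegative), evaluate $d$ between two such intervals as the maximum of the two endpoint discrepancies, and so reduce the whole statement to the classical scalar theory of the double Riemann integral applied separately to $\underline{F}$ and $\overline{F}$.
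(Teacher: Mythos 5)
You are right that this ``theorem'' is in fact the definition of the $ID$-integral imported from the cited reference, and the paper itself supplies no proof; the only verifiable content is well-definedness of the value $U$ (which is indeed suppressed in the displayed inequality, which should read $d(S(F,P,\delta,\Delta),U)<\epsilon$). Your uniqueness argument --- take $\delta=\min\{\delta_1,\delta_2\}$, note $\mathcal{P}(\delta,\Delta)$ is nonempty, and apply the triangle inequality for the Hausdorff metric to get $d(U_1,U_2)<2\epsilon$ for every $\epsilon>0$ --- is correct and is the standard check; your further remarks on the Cauchy criterion and the reduction to the endpoint functions $\underline{F},\overline{F}$ are consistent with how the paper uses the $ID$-integral in the subsequent theorem.
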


\begin{theorem}
\cite{zhao3}Let $\Delta =[a,b]\times \lbrack c,d]$. If $F:\Delta \rightarrow
\mathbb{R}
_{\mathcal{I}}$ is $ID-$integrable on $\Delta $, then we have%
\begin{equation*}
(ID)\iint \limits_{\Delta
}F(s,t)dA=(IR)\int_{a}^{b}(IR)\int_{c}^{d}F(s,t)dsdt.
\end{equation*}
\end{theorem}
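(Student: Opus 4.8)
The plan is to reduce the statement to the classical Fubini theorem for real-valued double Riemann integrals by passing to the endpoint functions. Write $F(s,t)=[\underline{F}(s,t),\overline{F}(s,t)]$. The first step is to record the two-variable analogue of Theorem \ref{t1} (established in \cite{zhao3}): the function $F$ is $ID$-integrable on $\Delta$ if and only if both endpoint functions $\underline{F},\overline{F}$ are (real) Riemann integrable on $\Delta$, and in that case
\[
(ID)\iint_{\Delta} F(s,t)\,dA=\Big[(R)\iint_{\Delta}\underline{F}(s,t)\,dA,\ (R)\iint_{\Delta}\overline{F}(s,t)\,dA\Big].
\]
This follows from the definition of the $ID$-integral by splitting each integral sum $S(F,P,\delta,\Delta)$ into its left and right endpoints $S(\underline{F},P,\delta,\Delta)$ and $S(\overline{F},P,\delta,\Delta)$ and using that, with respect to the Hausdorff metric $d$, convergence of intervals is equivalent to convergence of both endpoints; it is the same reasoning as for Theorem \ref{t1}, only carried out over rectangles instead of intervals.

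Second, I would apply the classical Fubini theorem for real Riemann integrable functions to $\underline{F}$ and to $\overline{F}$ separately: for each fixed $s\in[a,b]$ the slices $\underline{F}(s,\cdot),\overline{F}(s,\cdot)$ are Riemann integrable on $[c,d]$, the functions $s\mapsto(R)\int_c^d\underline{F}(s,t)\,dt$ and $s\mapsto(R)\int_c^d\overline{F}(s,t)\,dt$ are Riemann integrable on $[a,b]$, and
\[
(R)\iint_{\Delta}\underline{F}(s,t)\,dA=(R)\int_a^b\!\Big((R)\int_c^d\underline{F}(s,t)\,dt\Big)\,ds,
\]
together with the analogous identity for $\overline{F}$.

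Third, I would reassemble the intervals. Fixing $s$, Theorem \ref{t1} applied on $[c,d]$ gives $F(s,\cdot)\in\mathcal{IR}_{([c,d])}$ with
\[
(IR)\int_c^d F(s,t)\,dt=\Big[(R)\int_c^d\underline{F}(s,t)\,dt,\ (R)\int_c^d\overline{F}(s,t)\,dt\Big]=:G(s).
\]
By the previous step the endpoints of $G$ are Riemann integrable on $[a,b]$, so Theorem \ref{t1} applied once more, now on $[a,b]$, yields
\[
(IR)\int_a^b G(s)\,ds=\Big[(R)\int_a^b(R)\int_c^d\underline{F}\,dt\,ds,\ (R)\int_a^b(R)\int_c^d\overline{F}\,dt\,ds\Big].
\]
Combining this with the Fubini identities of the second step and the endpoint description of $(ID)\iint_{\Delta}F\,dA$ from the first step shows that the two sides coincide, which is the assertion (the argument is symmetric in the two variables, so the order of iteration may be reversed).

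The main obstacle I expect is the second step: the Fubini theorem for Riemann (as opposed to Lebesgue) integrals is delicate, since for a merely double-Riemann-integrable function the one-dimensional slice need not be Riemann integrable for every value of the outer variable. In this interval-valued setting this is handled either by restricting to functions for which the iterated $IR$-integrals exist (which is implicitly assumed, as the right-hand side is written as an iterated integral) or by the standard device of passing through the upper and lower Darboux integrals of the slices; once that is granted, everything else is componentwise bookkeeping of the real Fubini theorem through the Hausdorff-metric characterization of interval integrals.
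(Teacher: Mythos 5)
The paper does not prove this statement at all---it is imported verbatim from \cite{zhao3}---so there is no internal argument to compare yours against. That said, your outline is the natural proof and is essentially correct. The two-variable endpoint decomposition in your first step goes through exactly as in Theorem \ref{t1}: the weights $\Delta A_{i,j}$ in $S(F,P,\delta ,\Delta )$ are nonnegative, so each integral sum splits as $\left[ S(\underline{F},P,\delta ,\Delta ),S(\overline{F},P,\delta ,\Delta )\right] $, and convergence in the Hausdorff metric is equivalent to convergence of both endpoints; the reassembly in your third step is likewise just Theorem \ref{t1} applied twice. The one substantive point is the one you flag yourself: for a merely double-Riemann-integrable $\underline{F}$ or $\overline{F}$ the slice $t\mapsto \underline{F}(s,t)$ need not be Riemann integrable for every $s$, so the inner integral $(IR)\int_{c}^{d}F(s,t)\,dt$, and hence the right-hand side of the theorem as written, is not literally well defined without an additional hypothesis. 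Your two proposed remedies---either read the statement as tacitly assuming the iterated integral exists, or run the argument through the upper and lower Darboux integrals of the slices, which are always Riemann integrable in $s$ and both integrate to the double integral---are the standard fixes, and either closes the argument. With that caveat made explicit, your proof is complete.
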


\begin{example}
Let $F:\Delta =[0,1]\times \lbrack 1,2]\rightarrow
\mathbb{R}
_{\mathcal{I}}^{+}$ be defined by%
\begin{equation*}
F(s,t)=[st,s+t],
\end{equation*}%
then $F(s,t)$ is integrable on $\Delta $ and ($ID)\iint \limits_{\Delta
}F(t,s)dA=\left[ \frac{3}{4},2\right] .$
\end{example}

\section{\protect \Large Main Results}

In this section, we define interval-valued co-ordinated convex function and
prove some inequalities of Hermite-Hadamard type by using our new
definition. Throughout this section we will use $\Delta =[a,b]\times \lbrack
c,d]$, where $a<b$ and $c<d$, $a,b,c,d\in
\mathbb{R}
.$

\begin{definition}
A function $F:\Delta \rightarrow
\mathbb{R}
_{\mathcal{I}}^{+}$ is said to be interval-valued co-ordinated convex
function, if the following inequality holds:%
\begin{eqnarray*}
&&F(tx+(1-t)y,su+(1-s)w) \\
&\supseteq &tsF(x,u)+t(1-s)F(x,w)+s(1-t)F(y,u)+(1-s)(1-t)F(y,w),
\end{eqnarray*}

for all $(x,y),(u,w)\in \Delta $ and $s,t\in \lbrack 0,1].$
\end{definition}

\begin{lemma}
A function $F:\Delta \rightarrow
\mathbb{R}
_{\mathcal{I}}^{+}$ is interval-valued convex on co-ordinates if and only if
there exists two functions $F_{x}:[c,d]\rightarrow
\mathbb{R}
_{\mathcal{I}}^{+},~F_{x}(w)=F(x,w)$ and $F_{y}:[a,b]\rightarrow
\mathbb{R}
_{\mathcal{I}}^{+},~F_{y}(u)=F(y,u)$ are interval-valued convex.
\end{lemma}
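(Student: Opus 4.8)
The plan is to prove the two implications separately, in each case reducing the interval-valued statement to the corresponding real-valued statement via Theorem~\ref{t1} applied coordinatewise. Recall that for an interval-valued function $F$ with $F(x,w)=[\underline{F}(x,w),\overline{F}(x,w)]$, the inclusion
\begin{equation*}
A+B\supseteq C+D
\end{equation*}
between nonnegative intervals amounts to $\underline{A}+\underline{B}\le \underline{C}+\underline{D}$ (comparing left endpoints) together with $\overline{A}+\overline{B}\ge \overline{C}+\overline{D}$ (comparing right endpoints). Thus interval-valued co-ordinated convexity of $F$ is equivalent to the pair of scalar conditions: $\underline{F}$ is co-ordinated convex in the ordinary sense, and $\overline{F}$ is co-ordinated concave in the ordinary sense. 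I will use this translation throughout.

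For the forward direction, assume $F$ is interval-valued co-ordinated convex and fix $x\in[a,b]$. I would specialize the defining inclusion by taking the two points of $\Delta$ to have the same first coordinate, i.e.\ evaluate at $(x,su+(1-s)w)$, which forces the first-coordinate convex combination to collapse; the terms then recombine (using $ts+s(1-t)=s$ and $t(1-s)+(1-t)(1-s)=1-s$) to give exactly
\begin{equation*}
F(x,su+(1-s)w)\supseteq sF(x,u)+(1-s)F(x,w),
\end{equation*}
which says $F_x(w)=F(x,w)$ is interval-valued convex on $[c,d]$. The argument for $F_y$ is symmetric, fixing the second coordinate instead. This direction is essentially a substitution and is the easy half.

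For the converse, suppose $F_x$ is interval-valued convex on $[c,d]$ for every $x\in[a,b]$ and $F_y$ is interval-valued convex on $[a,b]$ for every $y\in[c,d]$. Passing to endpoints, this means each section of $\underline{F}$ is convex and each section of $\overline{F}$ is concave. I would then invoke the classical fact (the real-valued analogue of the present lemma, e.g.\ as used in Theorem~\ref{Dr1} and its source \cite{Dragomir}) that a real function all of whose horizontal and vertical sections are convex is co-ordinated convex: apply convexity in the first variable to write $\underline{F}(tx+(1-t)y,\,su+(1-s)w)\le t\,\underline{F}(x,su+(1-s)w)+(1-t)\,\underline{F}(y,su+(1-s)w)$, and then apply convexity in the second variable to each of the two terms $\underline{F}(x,\cdot)$ and $\underline{F}(y,\cdot)$, obtaining after collecting terms the four-point estimate for $\underline{F}$; the same chain with reversed inequalities handles $\overline{F}$. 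Recombining the endpoint inequalities into an interval inclusion yields the interval-valued co-ordinated convexity of $F$, completing the proof.

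The main obstacle is minor bookkeeping: one must be careful that the inclusion direction for intervals reverses the inequality direction for the upper endpoints, so that ``co-ordinated convex'' for $F$ corresponds to convex lower and concave upper envelopes, and ensure the iterated two-step convexity estimate is assembled with the coefficients matching $ts,\ t(1-s),\ s(1-t),\ (1-s)(1-t)$ exactly. No genuinely hard analytic step is involved; the content is entirely in the coordinatewise reduction via Theorem~\ref{t1}.
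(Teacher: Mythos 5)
Your proof is correct and is essentially the argument the paper has in mind: the paper offers no proof beyond the remark that the lemma ``follows immediately by the definition,'' and your two halves (collapse one coordinate in the defining inclusion, using $ts+s(1-t)=s$ and $t(1-s)+(1-t)(1-s)=1-s$, for the forward direction; iterate sectional convexity in each variable and distribute the nonnegative scalars for the converse) supply exactly the missing details. One small quibble: the endpoint translation $A\supseteq B\Leftrightarrow \underline{A}\le\underline{B}$ and $\overline{A}\ge\overline{B}$, together with the componentwise behaviour of interval addition and nonnegative scalar multiplication, is just elementary interval arithmetic and not a consequence of Theorem~\ref{t1}, which concerns Riemann integrability; the citation is harmless but misplaced.
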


The proof of this lemma follows immediately by the definition of
interval-valued co-ordinated convex function.

It is easy to proof that an inteval-valued convex function is inteval-valued
co-ordinated convex but the converse may not be true. For this we can see
the following example.

\begin{example}
An interval-valued function $F:[0,1]^{2}\rightarrow
\mathbb{R}
_{\mathcal{I}}^{+}$ defined as $F(x,y)=[xy,(6-e^{x})(6-e^{y})]$ is
interval-valued convex on co-ordinates but it is not interval-valued convex
on $[0,1]^{2}.$
\end{example}

\begin{proposition}
If $F,G:\Delta \rightarrow
\mathbb{R}
_{\mathcal{I}}^{+}$ are two interval-valued co-ordinated convex functions on
$\Delta $ and $\alpha \geq 0$, then $F+G$ and $\alpha F$ are interval-valued
co-ordinated convex functions.
\end{proposition}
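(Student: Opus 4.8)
The plan is to verify the defining inclusion for interval-valued co-ordinated convexity directly for the functions $F+G$ and $\alpha F$, using the corresponding inclusions for $F$ and $G$ together with the elementary arithmetic of intervals in $\mathbb{R}_{\mathcal{I}}^{+}$ (Minkowski sum and multiplication by nonnegative scalars, which are both monotone with respect to $\supseteq$).

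First I would treat $F+G$. Fix $(x,y),(u,w)\in\Delta$ and $s,t\in[0,1]$, and write $\lambda_{1}=ts$, $\lambda_{2}=t(1-s)$, $\lambda_{3}=s(1-t)$, $\lambda_{4}=(1-s)(1-t)$, so that $\lambda_{i}\geq 0$ and $\sum\lambda_{i}=1$. By hypothesis,
\begin{eqnarray*}
F(tx+(1-t)y,su+(1-s)w) &\supseteq& \lambda_{1}F(x,u)+\lambda_{2}F(x,w)+\lambda_{3}F(y,u)+\lambda_{4}F(y,w),\\
G(tx+(1-t)y,su+(1-s)w) &\supseteq& \lambda_{1}G(x,u)+\lambda_{2}G(x,w)+\lambda_{3}G(y,u)+\lambda_{4}G(y,w).
\end{eqnarray*}
Adding these two inclusions and using that $A\supseteq A'$ and $B\supseteq B'$ imply $A+B\supseteq A'+B'$ for intervals, then regrouping the right-hand side as $\lambda_{1}(F+G)(x,u)+\lambda_{2}(F+G)(x,w)+\lambda_{3}(F+G)(y,u)+\lambda_{4}(F+G)(y,w)$, gives exactly the co-ordinated convexity inclusion for $F+G$.

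Next I would treat $\alpha F$ for $\alpha\geq 0$. Multiplying the inclusion for $F$ by $\alpha$ and using that $A\supseteq A'$ implies $\alpha A\supseteq \alpha A'$ for $\alpha\geq 0$ (with the convention $0\cdot A=\{0\}$, or one may simply note the case $\alpha=0$ is trivial), and then distributing the scalar over the sum and the products $\alpha(\lambda_{i}F(\cdot,\cdot))=\lambda_{i}(\alpha F)(\cdot,\cdot)$, yields the desired inclusion for $\alpha F$.

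The only point requiring any care is the two monotonicity/distributivity facts about interval arithmetic used above: that the Minkowski sum and nonnegative scalar multiplication preserve $\supseteq$ and distribute over the convex combination. These follow at once from the componentwise description $[\underline{A},\overline{A}]+[\underline{B},\overline{B}]=[\underline{A}+\underline{B},\overline{A}+\overline{B}]$ and $\alpha[\underline{A},\overline{A}]=[\alpha\underline{A},\alpha\overline{A}]$ for $\alpha\geq 0$, so I expect no genuine obstacle; the proof is essentially a bookkeeping verification.
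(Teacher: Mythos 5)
Your proof is correct: the paper states this proposition without proof (treating it as immediate from the definition), and your direct verification via the monotonicity of Minkowski addition and nonnegative scalar multiplication with respect to $\supseteq$, together with the distributive identities $\lambda(A+B)=\lambda A+\lambda B$ and $\alpha(\lambda A)=\lambda(\alpha A)$ for $\alpha,\lambda\geq 0$, is exactly the intended bookkeeping argument. No gaps.
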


\begin{proposition}
If $F,G:\Delta \rightarrow
\mathbb{R}
_{\mathcal{I}}^{+}$ are two interval-valued co-ordinated convex functions on
$\Delta ,$ then ($FG$) is interval-valued co-ordinated convex function on $%
\Delta .$
\end{proposition}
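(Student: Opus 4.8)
The plan is to reduce the bivariate statement to a univariate one by means of the preceding Lemma, and then to settle the univariate case by a direct computation with interval arithmetic. By that Lemma, $FG$ is interval-valued co-ordinated convex as soon as, for each fixed $x\in[a,b]$, the section $(FG)_{x}:[c,d]\to\mathbb{R}_{\mathcal I}^{+}$ is interval-valued convex, and symmetrically for the sections $(FG)_{y}$ with $y\in[c,d]$ fixed. The key observation making the reduction work is that sections turn products into products: $(FG)_{x}(w)=F(x,w)G(x,w)=F_{x}(w)\,G_{x}(w)$, while $F_{x}$ and $G_{x}$ are themselves interval-valued convex (again by the Lemma, now applied to $F$ and to $G$). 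So it suffices to prove the one-variable fact that \emph{the pointwise product $PQ$ of two interval-valued convex functions $P,Q:[c,d]\to\mathbb{R}_{\mathcal I}^{+}$ is interval-valued convex}; by the symmetry of the two coordinates, this single fact yields the proposition.

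For the univariate product, fix $u,w\in[c,d]$ and $t\in[0,1]$ and set $z=tu+(1-t)w$. Interval-valued convexity of $P$ and of $Q$ gives $P(z)\supseteq tP(u)+(1-t)P(w)$ and $Q(z)\supseteq tQ(u)+(1-t)Q(w)$. Since all the intervals in sight are positive, interval multiplication is inclusion-monotone — $A\supseteq B$ and $C\supseteq D$ imply $AC\supseteq BD$ — and, because the endpoints are positive, it distributes exactly over these finite sums. Multiplying the two inclusions and expanding therefore gives
\[
(PQ)(z)\ \supseteq\ t^{2}P(u)Q(u)+t(1-t)\bigl(P(u)Q(w)+P(w)Q(u)\bigr)+(1-t)^{2}P(w)Q(w),
\]
and it remains to show that the right-hand side contains $tP(u)Q(u)+(1-t)P(w)Q(w)$, which is precisely interval-valued convexity of $PQ$ at the points $u,w$.

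Comparing endpoints, writing $P=[\underline P,\overline P]$ and $Q=[\underline Q,\overline Q]$, the inclusion required in the last display reduces to two scalar inequalities (one for the lower, one for the upper endpoints) whose decisive contribution is a cross term of the form $t(1-t)\bigl(\underline P(u)-\underline P(w)\bigr)\bigl(\underline Q(u)-\underline Q(w)\bigr)$, and its analogue in $\overline P,\overline Q$, carrying the correct sign. This sign of the mixed term is exactly the Chebyshev-sum phenomenon, and it is the step I expect to be the genuine obstacle: everything else — the reduction through the Lemma, the inclusion-monotonicity and distributivity of interval multiplication, and the bookkeeping of the expansion — is routine, whereas controlling these mixed terms is where all the content sits. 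I would isolate that two-point estimate as a separate elementary lemma (in the spirit of the $M(a,b),N(a,b)$ split used in Theorem~\ref{zhtm2}), establish it from the monotonicity/convexity properties of the endpoint functions, and then substitute it back into the displayed inclusion to finish; this is the only place where anything beyond formal interval arithmetic is required.
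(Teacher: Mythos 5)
Your reduction to the univariate product and the expansion of the two multiplied inclusions are both fine; the difficulty is exactly the step you defer, and it cannot be closed. After cancelling the $t^{2}$ and $(1-t)^{2}$ terms (using $\lambda A+\mu A=(\lambda+\mu)A$ for $\lambda,\mu\ge 0$ and the cancellativity of interval addition), the inclusion you still need is
\[
P(u)Q(w)+P(w)Q(u)\ \supseteq\ P(u)Q(u)+P(w)Q(w),
\]
which on endpoints reads $\bigl(\underline P(u)-\underline P(w)\bigr)\bigl(\underline Q(u)-\underline Q(w)\bigr)\ge 0$ together with $\bigl(\overline P(u)-\overline P(w)\bigr)\bigl(\overline Q(u)-\overline Q(w)\bigr)\le 0$. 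These are Chebyshev-type synchronicity conditions, and they do \emph{not} follow from interval-valued convexity: that hypothesis only makes $\underline P,\underline Q$ convex and $\overline P,\overline Q$ concave, and says nothing about whether the two lower (or upper) endpoint functions increase or decrease together. Indeed the Proposition is false as stated. On $\Delta=[0,1]^{2}$ take $F(x,y)=[x+1,10]$ and $G(x,y)=[2-x,10]$; both are interval-valued co-ordinated convex (every section has an affine lower endpoint and a constant upper endpoint), yet $(FG)(x,y)=[(x+1)(2-x),100]$ and
\[
(FG)\bigl(\tfrac12,y\bigr)=\bigl[\tfrac94,100\bigr]\ \not\supseteq\ [2,100]=\tfrac12 (FG)(0,y)+\tfrac12 (FG)(1,y),
\]
so $FG$ is not interval-valued co-ordinated convex. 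No auxiliary lemma can supply the missing two-point estimate without an added hypothesis such as the lower endpoint functions being similarly ordered and the upper ones oppositely ordered in each variable.

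For what it is worth, you have located precisely the point where the paper's own argument also breaks: there the two four-term inclusions are multiplied and the resulting sixteen-term expansion (with coefficients $t^{2}s^{2}$, $t^{2}s(1-s)$, and so on) is simply declared \emph{equal} to the desired four-term combination with coefficients $ts$, $t(1-s)$, $s(1-t)$, $(1-s)(1-t)$; this is not an identity of intervals, and the discrepancy is exactly the family of cross terms you isolated. Your diagnosis of where the content sits is correct, but the content is not there to be had.
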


\begin{proof}
Since $F$ and $G$ are interval-valued co-ordinated convex functions, we have
\begin{eqnarray}
&&F(tx+(1-t)y,su+(1-s)w)  \label{c1} \\
&\supseteq &tsF(x,u)+t(1-s)F(x,w)+s(1-t)F(y,u)+(1-s)(1-t)F(y,w)  \notag
\end{eqnarray}

and
\begin{eqnarray}
&&G(tx+(1-t)y,su+(1-s)w)  \label{c2} \\
&\supseteq &tsG(x,u)+t(1-s)G(x,w)+s(1-t)G(y,u)+(1-s)(1-t)G(y,w).  \notag
\end{eqnarray}

Multiplying (\ref{c1}) and (\ref{c2}), we have%
\begin{eqnarray*}
&&F(tx+(1-t)y,su+(1-s)w)G(tx+(1-t)y,su+(1-s)w) \\
&\supseteq &\left[ tsF(x,u)+t(1-s)F(x,w)+s(1-t)F(y,u)+(1-s)(1-t)F(y,w)\right]
\\
&&\times \left[ tsG(x,u)+t(1-s)G(x,w)+s(1-t)G(y,u)+(1-s)(1-t)G(y,w)\right] \\
&=&stF(x,u)G(x,u)+t(1-s)F(x,w)G(x,w) \\
&&+(1-t)sF(y,u)G(y,u)+(1-t)(1-s)F(y,w)G(y,w)
\end{eqnarray*}

and therefore $(FG)$ is inteval-valued co-ordinated convex function.
\end{proof}

In what follows, without causing confusion, we will delete notations of ($R$%
), ($IR$) and~($ID$). We start with the following Theorem.

\begin{theorem}
\label{mt1} If $F:\Delta \rightarrow
\mathbb{R}
_{\mathcal{I}}^{+}$ is interval-valued co-ordinated convex function on $%
\Delta $ such that $F(t)=\left[ \underline{F(t)},\overline{F(t)}\right] $,
then following inequalities holds:%
\begin{eqnarray}
F\left( \frac{a+b}{2},\frac{c+d}{2}\right)  &\supseteq &\frac{1}{2}\left[
\frac{1}{b-a}\int_{a}^{b}F\left( x,\frac{c+d}{2}\right) dx+\frac{1}{d-c}%
\int_{c}^{d}F\left( \frac{a+b}{2},y\right) dy\right]   \label{c3} \\
&\supseteq &\frac{1}{(b-a)(d-c)}\int_{a}^{b}\int_{c}^{d}F(x,y)dydx  \notag \\
&\supseteq &\frac{1}{4}\left[ \frac{1}{b-a}\int_{a}^{b}F(x,c)dx+\frac{1}{b-a}%
\int_{a}^{b}F(x,d)dx\right.   \notag \\
&&\left. +\frac{1}{d-c}\int_{c}^{d}F(a,y)dy+\frac{1}{d-c}\int_{c}^{d}F(b,y)dy%
\right]   \notag \\
&\supseteq &\frac{F(a,c)+F(a,d)+F(b,c)+F(b,d)}{4}.  \notag
\end{eqnarray}
\end{theorem}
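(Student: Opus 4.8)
The plan is to reduce the two-dimensional statement to the one-dimensional Hermite--Hadamard inequality (Theorem \ref{zhtm1} with $h(t)=t$, equivalently \eqref{k2}) applied to the partial functions $F_x$ and $F_y$ furnished by the co-ordinate lemma, and then to combine the resulting inclusions by integrating in the remaining variable. Recall that all the integrals here are $ID$-integrals of positive interval-valued functions, so Theorem \ref{t1} together with the Fubini-type identity for $ID$-integrals lets me pass freely between the double integral and the iterated integrals, and the inclusion order $\supseteq$ is preserved under addition, nonnegative scalar multiplication, and integration of interval-valued functions.

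First I would prove the leftmost inclusion in \eqref{c3}. Since $F$ is interval-valued co-ordinated convex, the map $y\mapsto F\!\left(\frac{a+b}{2},y\right)$ is an interval-valued convex function on $[c,d]$, so \eqref{k2} gives $F\!\left(\frac{a+b}{2},\frac{c+d}{2}\right)\supseteq \frac{1}{d-c}\int_c^d F\!\left(\frac{a+b}{2},y\right)dy$; symmetrically $F\!\left(\frac{a+b}{2},\frac{c+d}{2}\right)\supseteq \frac{1}{b-a}\int_a^b F\!\left(x,\frac{c+d}{2}\right)dx$. Adding these two inclusions and dividing by $2$ yields the first line of \eqref{c3}. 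For the second inclusion I would apply \eqref{k2} to $x\mapsto F(x,y)$ for each fixed $y$, obtaining $\frac{1}{b-a}\int_a^b F(x,y)\,dx\supseteq F\!\left(\frac{a+b}{2},y\right)$ read in the reverse direction, i.e. $F\!\left(\frac{a+b}{2},y\right)\supseteq \frac{1}{b-a}\int_a^b F(x,y)\,dx$; integrating this in $y$ over $[c,d]$ and dividing by $d-c$ gives $\frac{1}{d-c}\int_c^d F\!\left(\frac{a+b}{2},y\right)dy \supseteq \frac{1}{(b-a)(d-c)}\int_a^b\int_c^d F(x,y)\,dy\,dx$, and the symmetric computation handles the other half; averaging again gives the middle term $\supseteq$ the double integral.

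For the third inclusion I would again use \eqref{k2} in the direction $\frac{1}{d-c}\int_c^d F(x,y)\,dy\supseteq \frac{F(x,c)+F(x,d)}{2}$ for each fixed $x$ (legitimate since $y\mapsto F(x,y)$ is interval-valued convex), integrate in $x$, divide by $b-a$, and symmetrically integrate $\frac{1}{b-a}\int_a^b F(x,y)\,dx\supseteq \frac{F(a,y)+F(b,y)}{2}$ in $y$; writing the double integral two ways via Fubini and averaging the two resulting inclusions produces exactly the fourth term of \eqref{c3}. Finally, for the rightmost inclusion I would apply \eqref{k2} once more to each of the four single-variable convex functions $x\mapsto F(x,c)$, $x\mapsto F(x,d)$, $y\mapsto F(a,y)$, $y\mapsto F(b,y)$, getting e.g. $\frac{1}{b-a}\int_a^b F(x,c)\,dx\supseteq \frac{F(a,c)+F(b,c)}{2}$, and add the four resulting inclusions with weight $\frac14$.

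I expect the only real care needed — the "obstacle" such as it is — to be bookkeeping about the direction of the inclusions: the outer bound $F(\text{midpoint})\supseteq\text{average}$ from \eqref{k2} is used in its stated direction in the first step but in the \emph{reversed} reading (midpoint value $\supseteq$ integral, hence integral of midpoint values $\supseteq$ double integral) in the second and third steps, which is exactly how the chain of $\supseteq$'s propagates. One should also note explicitly that a single-variable section of an interval-valued co-ordinated convex function is interval-valued convex (this is the content of the Lemma above), so that Theorem \ref{zhtm1}/\eqref{k2} is applicable, and that the $ID$-integral is additive and monotone with respect to $\supseteq$ and commutes with iterated integration — all of which are recorded in Section 3. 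No genuinely hard estimate arises; the proof is a four-fold application of the one-dimensional result glued together by Fubini.
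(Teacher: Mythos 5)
Your proposal is correct and follows essentially the same route as the paper: apply the one-dimensional interval Hermite--Hadamard inclusion \eqref{k2} to the co-ordinate sections $F_x$ and $F_y$, integrate in the remaining variable, and combine the resulting inclusions pairwise (the paper merely packages the second and third inclusions into a single chain via \eqref{c5} and \eqref{c6} rather than treating them separately). The only cosmetic slip is your phrase ``$\frac{1}{b-a}\int_a^b F(x,y)\,dx\supseteq F\bigl(\frac{a+b}{2},y\bigr)$ read in the reverse direction'' --- \eqref{k2} already gives the inclusion in the direction you actually use, so no reversal is involved.
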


\begin{proof}
Since $F$ is interval -valued co-ordinated convex function on co-ordinates $%
\Delta $, then $F_{x}:[c,d]\rightarrow
\mathbb{R}
_{\mathcal{I}}^{+},~F_{x}(y)=F(x,y)$ is interval-valued convex function on $%
[c,d]$ and for all $x\in \lbrack a,b].$ From inequality (\ref{k2}), we have%
\begin{equation*}
F_{x}\left( \frac{c+d}{2}\right) \supseteq \frac{1}{d-c}\int_{c}^{d}F_{x}%
\left( y\right) dy\supseteq \frac{F_{x}(c)+F_{x}(d)}{2},
\end{equation*}

that can be written as%
\begin{equation}
F\left( x,\frac{c+d}{2}\right) \supseteq \frac{1}{d-c}\int_{c}^{d}F\left(
x,y\right) dy\supseteq \frac{F(x,c)+F(x,d)}{2}.  \label{c4}
\end{equation}

Integrating (\ref{c4}) with respect to $x$ over $[a,b]$ and dividing both
sides by $(b-a),$ we have%
\begin{eqnarray}
\frac{1}{b-a}\int_{a}^{b}F\left( x,\frac{c+d}{2}\right) dx &\supseteq &\frac{%
1}{(b-a)(d-c)}\int_{a}^{b}\int_{c}^{d}F\left( x,y\right) dydx  \label{c5} \\
&\supseteq &\frac{1}{2(b-a)}\left[ \int_{a}^{b}F(x,c)dx+\int_{a}^{b}F(x,d)dx%
\right] .  \notag
\end{eqnarray}

Similarly, $F_{y}=[a,b]\rightarrow
\mathbb{R}
_{\mathcal{I}}^{+},$ $F_{y}(x)=F(x,y)$ is interval-valued convex function on
$[a,b]$ and $y\in \lbrack c,d]$, we have
\begin{eqnarray}
\frac{1}{d-c}\int_{c}^{d}F\left( \frac{a+b}{2},y\right) dy &\supseteq &\frac{%
1}{(b-a)(d-c)}\int_{a}^{b}\int_{a}^{b}F(x,y)dydx  \label{c6} \\
&\supseteq &\frac{1}{2(d-c)}\left[ \int_{c}^{d}F(a,y)dy+\int_{c}^{d}F(b,y)dy%
\right] .  \notag
\end{eqnarray}

By adding (\ref{c5}) and (\ref{c6}) and using Theorem \ref{t1}, we have
second and third inequality in (\ref{c3}).

We also have from (\ref{k2}),%
\begin{eqnarray}
F\left( \frac{a+b}{2},\frac{c+d}{2}\right) &\supseteq &\frac{1}{b-a}%
\int_{a}^{b}F\left( x,\frac{c+d}{2}\right) dx  \label{c7} \\
F\left( \frac{a+b}{2},\frac{c+d}{2}\right) &\supseteq &\frac{1}{d-c}%
\int_{c}^{d}F\left( \frac{a+b}{2},y\right) dy.  \label{c8}
\end{eqnarray}

By adding (\ref{c7}) and (\ref{c8}) and using Theorem (\ref{t1}), we have
first inequality in (\ref{c3}).

At the end, again from (\ref{k1}) and Theorem (\ref{t1}), we have%
\begin{eqnarray*}
\frac{1}{b-a}\int_{a}^{b}F(x,c)dx &\supseteq &\frac{F(a,c)+F(b,c)}{2}, \\
\frac{1}{b-a}\int_{a}^{b}F(x,d)dx &\supseteq &\frac{F(a,d)+F(b,d)}{2}, \\
\frac{1}{d-c}\int_{c}^{d}F(a,y)dy &\supseteq &\frac{F(a,c)+F(a,d)}{2}, \\
\frac{1}{d-c}\int_{c}^{d}F(b,y)dy &\supseteq &\frac{F(b,c)+F(b,d)}{2}
\end{eqnarray*}

and proof is completed.
\end{proof}

\begin{example}
Suppose that $[a,b]=[0,1]$ and $[c,d]=[1,2].$ Let $F:[a,b]\times \lbrack
c,d]\rightarrow
\mathbb{R}
_{\mathcal{I}}^{+}$ be given as $F(x,y)=[xy,4xy]$, for all $x\in \lbrack
a,b] $ and $y\in \lbrack c,d].$ We have%
\begin{equation*}
F\left( \frac{a+b}{2},\frac{c+d}{2}\right) =\left[ \frac{3}{4},3\right] ,
\end{equation*}%
\begin{equation*}
\frac{1}{2}\left[ \frac{1}{b-a}\int_{a}^{b}F\left( x,\frac{c+d}{2}\right) dx+%
\frac{1}{d-c}\int_{c}^{d}F\left( \frac{a+b}{2},y\right) dy\right] =\left[
\frac{3}{4},3\right] ,
\end{equation*}%
\begin{equation*}
\frac{1}{(b-a)(d-c)}\int_{a}^{b}\int_{c}^{d}F(x,y)dydx=\left[ \frac{3}{4},3%
\right] ,
\end{equation*}%
\begin{equation*}
\frac{1}{4}\left[ \frac{1}{b-a}\int_{a}^{b}F(x,c)dx+\frac{1}{b-a}%
\int_{a}^{b}F(x,d)dx\right.
\end{equation*}%
\begin{equation*}
\left. +\frac{1}{d-c}\int_{c}^{d}F(a,y)dy+\frac{1}{d-c}\int_{c}^{d}F(b,y)dy%
\right] =\left[ \frac{3}{4},3\right] ,
\end{equation*}%
\begin{equation*}
\frac{F(a,c)+F(a,d)+F(b,c)+F(b,d)}{4}=\left[ \frac{3}{4},3\right] .
\end{equation*}

Hence $\left[ \frac{3}{4},3\right] \supseteq \left[ \frac{3}{4},3\right]
\supseteq \left[ \frac{3}{4},3\right] \supseteq \left[ \frac{3}{4},3\right]
. $
\end{example}

\begin{remark}
If $\overline{F}=\underline{F}$ in Theorem \ref{mt1}, then Theorem \ref{mt1}
reduces to Theorem \ref{Dr1}.
\end{remark}

\begin{theorem}
\label{mt2} If $F,G:\Delta \rightarrow
\mathbb{R}
_{\mathcal{I}}^{+}$ are two interval--valued co-ordinated convex functions
such that $F(t)=\left[ \underline{F(t)},\overline{F(t)}\right] $ and $G(t)=%
\left[ \underline{G(t)},\overline{G(t)}\right] ,$ then the following
inequality holds:%
\begin{equation}
\frac{1}{(b-a)(d-c)}\int_{a}^{b}\int_{c}^{d}F(x,y)G(x,y)dydx\supseteq \frac{1%
}{9}P(a,b,c,d)+\frac{1}{18}M(a,b,c,d)+\frac{1}{36}N(a,b,c,d),  \label{c9}
\end{equation}

where%
\begin{eqnarray*}
P(a,b,c,d) &=&F(a,c)G(a,c)+F(a,d)G(a,d)+F(b,d)G(b,d), \\
M(a,b,c,d) &=&F(a,c)G(a,d)+F(a,d)G(a,c)+F(b,c)G(b,d)+F(b,d)G(b,c) \\
&&+F(b,c)G(a,c)+F(a,c)G(b,c)+F(b,d)G(a,d)+F(a,d)G(b,d), \\
N(a,b,c,d) &=&F(b,c)G(a,d)+F(a,d)G(b,c)+F(b,d)G(a,c)+F(a,c)G(b,d).
\end{eqnarray*}
\end{theorem}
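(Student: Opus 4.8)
The plan is to derive (\ref{c9}) by applying the one--dimensional product Hermite--Hadamard inequality (\ref{k5}) twice: once in the variable $y$ on $[c,d]$ for each fixed $x$, and then a second time in the variable $x$ on $[a,b]$ for the resulting boundary integrals. Throughout I use that the interval integral is monotone with respect to $\supseteq$ (if $H(x)\supseteq K(x)$ for all $x$ then $\int H\supseteq\int K$, which is immediate from the componentwise description in Theorem \ref{t1}) and that the $ID$-double integral equals the iterated integral.

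First fix $x\in[a,b]$. By the characterization of co-ordinated convexity in the Lemma, the slices $y\mapsto F(x,y)$ and $y\mapsto G(x,y)$ are positive interval--valued convex functions on $[c,d]$, so (\ref{k5}) applied on $[c,d]$ gives
\[
\frac{1}{d-c}\int_c^d F(x,y)G(x,y)\,dy\supseteq\frac13\bigl[F(x,c)G(x,c)+F(x,d)G(x,d)\bigr]+\frac16\bigl[F(x,c)G(x,d)+F(x,d)G(x,c)\bigr].
\]
Integrating this inclusion over $x\in[a,b]$, dividing by $b-a$, and rewriting the double integral as an iterated one, the left--hand side becomes the quantity in (\ref{c9}), while the right--hand side is a combination, with weights $\frac13,\frac13,\frac16,\frac16$, of the four averages $\frac{1}{b-a}\int_a^b F(x,c)G(x,c)\,dx$, $\frac{1}{b-a}\int_a^b F(x,d)G(x,d)\,dx$, $\frac{1}{b-a}\int_a^b F(x,c)G(x,d)\,dx$ and $\frac{1}{b-a}\int_a^b F(x,d)G(x,c)\,dx$.

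Each of these four integrands is a product of two positive interval--valued convex functions on $[a,b]$ (again by the Lemma, the boundary slices $F(\cdot,c),F(\cdot,d),G(\cdot,c),G(\cdot,d)$ are convex on $[a,b]$), so I apply (\ref{k5}) once more to each of them; for the \emph{diagonal} averages one obtains, e.g.,
\[
\frac{1}{b-a}\int_a^b F(x,c)G(x,c)\,dx\supseteq\frac13\bigl[F(a,c)G(a,c)+F(b,c)G(b,c)\bigr]+\frac16\bigl[F(a,c)G(b,c)+F(b,c)G(a,c)\bigr],
\]
and for the \emph{mixed} ones, e.g.,
\[
\frac{1}{b-a}\int_a^b F(x,c)G(x,d)\,dx\supseteq\frac13\bigl[F(a,c)G(a,d)+F(b,c)G(b,d)\bigr]+\frac16\bigl[F(a,c)G(b,d)+F(b,c)G(a,d)\bigr],
\]
together with the three analogous inclusions. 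Substituting these back, using that multiplication by the nonnegative scalars $\frac13,\frac16$ and addition of intervals both preserve $\supseteq$, and collecting the resulting corner terms $F(p,q)G(p',q')$ with $p,p'\in\{a,b\}$ and $q,q'\in\{c,d\}$ according to their coefficients --- $\frac13\cdot\frac13=\frac19$ for the terms with $(p,q)=(p',q')$, which make up $P(a,b,c,d)$; $\frac13\cdot\frac16+\frac16\cdot\frac13=\frac1{18}$ for the terms sharing exactly one coordinate, which make up $M(a,b,c,d)$; and $\frac16\cdot\frac16=\frac1{36}$ for the terms with $p\neq p'$ and $q\neq q'$, which make up $N(a,b,c,d)$ --- gives (\ref{c9}).

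The only delicate point is this final bookkeeping: one has to check that the two successive uses of (\ref{k5}) produce each of the eight one--coordinate--shared products exactly once --- four of them from the $\frac16$-part of the two diagonal averages (those sharing a second coordinate, such as $F(a,c)G(b,c)$) and four from the $\frac13$-part of the two mixed averages (those sharing a first coordinate, such as $F(a,c)G(a,d)$) --- so that their common weight is indeed $\frac1{18}$, and similarly that the four pure--corner products and the four fully--mixed products occur with the weights $\frac19$ and $\frac1{36}$ respectively. Beyond this, nothing is needed but the Lemma, the monotonicity of the interval integral under inclusion, the iterated--integral representation of the $ID$-double integral, and inequality (\ref{k5}).
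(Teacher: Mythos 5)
Your proof is correct and follows essentially the same route as the paper's: apply (\ref{k5}) in $y$ for each fixed $x$, integrate in $x$, apply (\ref{k5}) again to the four resulting boundary averages, and collect coefficients. Two small remarks: the displayed identity $\frac13\cdot\frac16+\frac16\cdot\frac13=\frac1{18}$ is not literally correct as an equation (each one-coordinate-shared term arises from exactly \emph{one} of the two sources, each contributing $\frac1{18}$, as your final paragraph rightly explains), and your bookkeeping in fact produces all four diagonal products $F(p,q)G(p,q)$ with weight $\frac19$, so the printed $P(a,b,c,d)$, which omits $F(b,c)G(b,c)$, appears to contain a typographical omission rather than an error in your argument.
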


\begin{proof}
Since $F$ and $G$ are interval-valued co-ordinated convex functions on $%
\Delta $, therefore $F_{x}(y):[c,d]\rightarrow
\mathbb{R}
_{\mathcal{I}}^{+},~F_{x}(y)=F(x,y)$, $G_{x}(y):[c,d]\rightarrow
\mathbb{R}
_{\mathcal{I}}^{+},~G_{x}(y)=G(x,y)$ and $F_{y}(x):[a,b]\rightarrow
\mathbb{R}
_{\mathcal{I}}^{+},~F_{y}(x)=F(x,y),~G_{y}:[a,b]\rightarrow
\mathbb{R}
_{\mathcal{I}}^{+},~G_{y}(x)=G(x,y)$ are interval-valued convex functions on
$[c,d]$ and $[a,b]$ respectively for all $x\in \lbrack a,b],~y\in \lbrack
c,d].$

Now from inequality (\ref{k5}), we have%
\begin{eqnarray*}
\frac{1}{d-c}\int_{c}^{d}F_{x}(y)G_{x}(y)dy &\supseteq &\frac{1}{3}%
[F_{x}(c)G_{x}(c)+F_{x}(d)G_{x}(d)] \\
&&+\frac{1}{6}[F_{x}(c)G_{x}(d)+F_{x}(d)G_{x}(c)]
\end{eqnarray*}

that can be written as%
\begin{eqnarray*}
\frac{1}{d-c}\int_{c}^{d}F(x,y)G(x,y)dy &\supseteq &\frac{1}{3}%
[F(x,c)G(x,c)+F(x,d)G(x,d)] \\
&&+\frac{1}{6}[F(x,c)G(x,d)+F(x,d)G(x,c)].
\end{eqnarray*}

Integrating the above inequality with respect to $x$ over $[a,b]$ and and
dividing both sides by $b-a$, we have%
\begin{eqnarray}
&&\frac{1}{(b-a)(d-c)}\int_{a}^{b}\int_{c}^{d}F(x,y)G(x,y)dydx  \label{c10}
\\
&\supseteq &\frac{1}{3(b-a)}\int_{a}^{b}[F(x,c)G(x,c)+F(x,d)G(x,d)]dx  \notag
\\
&&+\frac{1}{6(b-a)}\int_{a}^{b}[F(x,c)G(x,d)+F(x,d)G(x,c)]dx.  \notag
\end{eqnarray}

Now using inequality (\ref{k5}) to each integral on right hand side of (\ref%
{c10}), we have%
\begin{eqnarray}
\frac{1}{b-a}\int_{a}^{b}F(x,c)G(x,c)dx &\supseteq &\frac{1}{3}%
[F(a,c)G(a,c)+F(b,c)G(b,c)]  \label{c11} \\
&&+\frac{1}{6}[F(a,c)G(b,c)+F(b,c)G(a,c)],  \notag \\
\frac{1}{b-a}\int_{a}^{b}F(x,d)G(x,d)dx &\supseteq &\frac{1}{3}%
[F(a,d)G(a,d)+F(b,d)G(b,d)]  \label{c12} \\
&&+\frac{1}{6}[F(a,d)G(b,d)+F(b,d)G(a,d)],  \notag \\
\frac{1}{b-a}\int_{a}^{b}F(x,c)G(x,d)dx &\supseteq &\frac{1}{3}%
[F(a,c)G(a,d)+F(b,c)G(b,d)]  \label{c13} \\
&&+\frac{1}{6}[F(a,c)G(b,d)+F(b,c)G(a,d)],  \notag \\
\frac{1}{b-a}\int_{a}^{b}F(x,d)G(x,c)dx &\supseteq &\frac{1}{3}%
[F(a,d)G(a,c)+F(b,d)G(b,c)]  \label{c14} \\
&&+\frac{1}{6}[F(a,d)G(b,c)+F(b,d)G(a,c)].  \notag
\end{eqnarray}

Substituting (\ref{c11})-(\ref{c14}) in (\ref{c10}), we have our desired
inequality (\ref{c9}). Similarly we can find same inequality by using $%
F_{y}(x)G_{y}(x)$ on $[a,b].$
\end{proof}

\begin{remark}
If $\overline{F}=\underline{F}$ in Theorem \ref{mt2}, then Theorem \ref{mt2}
reduces to (\cite[Theorem 4]{AL}).
\end{remark}

\begin{theorem}
\label{mt3}If $F,G:\Delta \rightarrow
\mathbb{R}
_{\mathcal{I}}^{+}$ are two interval--valued co-ordinated convex functions
such that $F(t)=\left[ \underline{F(t)},\overline{F(t)}\right] $ and $G(t)=%
\left[ \underline{G(t)},\overline{G(t)}\right] ,$ then we have the following
inequality:%
\begin{eqnarray}
&&4F\left( \frac{a+b}{2},\frac{c+d}{2}\right) G\left( \frac{a+b}{2},\frac{c+d%
}{2}\right)   \label{c15} \\
&\supseteq &\frac{1}{(b-a)(d-c)}\int_{a}^{b}\int_{c}^{d}F(x,y)G(x,y)dydx
\notag \\
&&+\frac{5}{36}P(a,b,c,d)+\frac{7}{36}M(a,b,c,d)+\frac{2}{9}N(a,b,c,d),
\notag
\end{eqnarray}

where $P(a,b,c,d)$, $M(a,b,c,d)$ and $N(a,b,c,d)$ are defined in Theorem \ref%
{mt2}.
\end{theorem}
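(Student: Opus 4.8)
The plan is to derive \eqref{c15} from the one--dimensional product inequality \eqref{k6} applied to the coordinate slices, in the same spirit in which Theorem \ref{mt2} was obtained from \eqref{k5}. Write $m_{1}=\frac{a+b}{2}$, $m_{2}=\frac{c+d}{2}$, and recall from the Lemma above that $F(x,\cdot ),G(x,\cdot )$ are interval--valued convex on $[c,d]$ for each fixed $x\in \lbrack a,b]$, and $F(\cdot ,y),G(\cdot ,y)$ are interval--valued convex on $[a,b]$ for each fixed $y\in \lbrack c,d]$. First I would fix $x\in \lbrack a,b]$ and apply \eqref{k6} to $F(x,\cdot )$ and $G(x,\cdot )$ on $[c,d]$, then integrate the resulting inclusion with respect to $x$ over $[a,b]$, divide by $b-a$, and use Theorem \ref{t1} together with the reduction of the double integral to the iterated integral recalled in Section~3, to obtain
\begin{equation*}
\frac{2}{b-a}\int_{a}^{b}F(x,m_{2})G(x,m_{2})dx\supseteq \frac{1}{(b-a)(d-c)}\int_{a}^{b}\int_{c}^{d}F(x,y)G(x,y)dydx+\frac{1}{6}J_{1}+\frac{1}{3}J_{2},
\end{equation*}
where $J_{1}=\frac{1}{b-a}\int_{a}^{b}[F(x,c)G(x,c)+F(x,d)G(x,d)]dx$ and $J_{2}=\frac{1}{b-a}\int_{a}^{b}[F(x,c)G(x,d)+F(x,d)G(x,c)]dx$. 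Then I would apply \eqref{k6} once more, this time to the interval--valued convex functions $F(\cdot ,m_{2})$ and $G(\cdot ,m_{2})$ on $[a,b]$, and multiply by $2$, which gives $4F(m_{1},m_{2})G(m_{1},m_{2})\supseteq \frac{2}{b-a}\int_{a}^{b}F(x,m_{2})G(x,m_{2})dx+\frac{1}{3}K_{1}+\frac{2}{3}K_{2}$, with $K_{1}=F(a,m_{2})G(a,m_{2})+F(b,m_{2})G(b,m_{2})$ and $K_{2}=F(a,m_{2})G(b,m_{2})+F(b,m_{2})G(a,m_{2})$. Chaining the two displays,
\begin{equation*}
4F(m_{1},m_{2})G(m_{1},m_{2})\supseteq \frac{1}{(b-a)(d-c)}\int_{a}^{b}\int_{c}^{d}F(x,y)G(x,y)dydx+\frac{1}{6}J_{1}+\frac{1}{3}J_{2}+\frac{1}{3}K_{1}+\frac{2}{3}K_{2}.
\end{equation*}

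It remains to bound $J_{1},J_{2},K_{1},K_{2}$ by corner values. For $J_{1}$ and $J_{2}$ I would apply \eqref{k5} in the variable $x$ to each of the four pairs of slice functions $F(\cdot ,c),G(\cdot ,c)$; $F(\cdot ,d),G(\cdot ,d)$; $F(\cdot ,c),G(\cdot ,d)$; $F(\cdot ,d),G(\cdot ,c)$ on $[a,b]$, exactly as in the proof of Theorem \ref{mt2}. For $K_{1}$ and $K_{2}$ the key is to reduce the boundary midpoints: by \eqref{k2}, $F(a,m_{2})\supseteq \frac{1}{2}[F(a,c)+F(a,d)]$, and similarly for $G$ and for the endpoint $b$; since all values lie in $\mathbb{R}_{\mathcal{I}}^{+}$, where the endpoints of a product of positive intervals are the products of the endpoints, the relations $A\supseteq B$ and $C\supseteq D$ force $AC\supseteq BD$, whence
\begin{equation*}
F(a,m_{2})G(a,m_{2})\supseteq \frac{1}{4}[F(a,c)+F(a,d)][G(a,c)+G(a,d)],
\end{equation*}
and likewise for the other three products occurring in $K_{1},K_{2}$. (The same reductions could instead be obtained by applying \eqref{k6} in the variable $y$ along the boundary and then \eqref{k5}.)

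Substituting all of these bounds into the last display and collecting the sixteen resulting products of corner values --- each of them being either a product of equal corners (a $P$--term), corners differing in exactly one coordinate (an $M$--term), or opposite corners (an $N$--term) --- one finds the coefficient of each $P$--term to be $\frac{1}{18}+\frac{1}{12}=\frac{5}{36}$, of each $M$--term to be $\frac{1}{36}+\frac{1}{6}=\frac{7}{36}$ (equivalently $\frac{1}{9}+\frac{1}{12}=\frac{7}{36}$ for the remaining half of the $M$--terms), and of each $N$--term to be $\frac{1}{18}+\frac{1}{6}=\frac{2}{9}$, which is exactly $\frac{5}{36}P(a,b,c,d)+\frac{7}{36}M(a,b,c,d)+\frac{2}{9}N(a,b,c,d)$; this yields \eqref{c15}. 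I expect the main obstacle to be precisely the boundary--midpoint products $F(a,m_{2})G(a,m_{2})$, etc., which are forced into the estimate by the second application of \eqref{k6} and are not themselves corner values: reducing them correctly --- which rests on the fact that inclusion of positive intervals is preserved under multiplication --- is the crux, and after that only the lengthy but routine bookkeeping of the sixteen corner products remains.
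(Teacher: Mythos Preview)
Your argument is correct, and it reaches \eqref{c15} by a genuinely different route than the paper's.

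The paper proceeds \emph{symmetrically}: it first applies \eqref{k6} to the slices $F(\cdot,m_{2})G(\cdot,m_{2})$ on $[a,b]$ and $F(m_{1},\cdot)G(m_{1},\cdot)$ on $[c,d]$, adds and doubles; it then applies \eqref{k6} a further eight times to each of the boundary midpoint products $2F(a,m_{2})G(a,m_{2})$, $2F(m_{1},c)G(m_{1},d)$, etc., producing one--dimensional boundary integrals plus corner terms; it applies \eqref{k6} twice more to convert the two midpoint--slice integrals into the double integral plus further boundary integrals; and only at the very end does it invoke \eqref{k5} on all remaining boundary integrals. Altogether a dozen applications of \eqref{k6} followed by several of \eqref{k5}.

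Your approach is \emph{sequential} and leaner: one integrated application of \eqref{k6} in $y$ already produces the double integral, a single application of \eqref{k6} in $x$ at $y=m_{2}$ gives the left side, and then you split the remainder into integral terms $J_{1},J_{2}$ (handled by \eqref{k5} exactly as in Theorem~\ref{mt2}) and midpoint--product terms $K_{1},K_{2}$, which you reduce to corner values directly via the convexity inclusion $F(a,m_{2})\supseteq\tfrac{1}{2}[F(a,c)+F(a,d)]$ together with the monotonicity of the product of positive intervals under inclusion. This last device is the real shortcut relative to the paper: it bypasses the paper's extra layer of \eqref{k6}--then--\eqref{k5} on the boundary midpoints, at the modest cost of making the argument asymmetric in the two coordinates. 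Your coefficient bookkeeping ($\tfrac{1}{18}+\tfrac{1}{12}=\tfrac{5}{36}$ for $P$, $\tfrac{1}{36}+\tfrac{1}{6}=\tfrac{1}{9}+\tfrac{1}{12}=\tfrac{7}{36}$ for the two halves of $M$, $\tfrac{1}{18}+\tfrac{1}{6}=\tfrac{2}{9}$ for $N$) checks out.
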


\begin{proof}
Since $F~$and $G$ are interval-valued co-ordinated convex functions, from (%
\ref{k6}) we have
\begin{eqnarray}
&&2F\left( \frac{a+b}{2}\right) G\left( \frac{a+b}{2}\right)  \label{c16} \\
&\supseteq &\frac{1}{b-a}\int_{a}^{b}F\left( x,\frac{c+d}{2}\right) G\left(
x,\frac{c+d}{2}\right) dx  \notag \\
&&+\frac{1}{6}\left[ F\left( a,\frac{c+d}{2}\right) G\left( a,\frac{c+d}{2}%
\right) +F\left( b,\frac{c+d}{2}\right) G\left( b,\frac{c+d}{2}\right) %
\right]  \notag \\
&&+\frac{1}{3}\left[ F\left( a,\frac{c+d}{2}\right) G\left( b,\frac{c+d}{2}%
\right) +F\left( b,\frac{c+d}{2}\right) G\left( a,\frac{c+d}{2}\right) %
\right]  \notag
\end{eqnarray}

and
\begin{eqnarray}
&&2F\left( \frac{a+b}{2}\right) G\left( \frac{a+b}{2}\right)  \label{c17} \\
&\supseteq &\frac{1}{d-c}\int_{c}^{d}F\left( \frac{a+b}{2},y\right) G\left(
\frac{a+b}{2},y\right) dy  \notag \\
&&+\frac{1}{6}\left[ F\left( \frac{a+b}{2},c\right) G\left( \frac{a+b}{2}%
,c\right) +F\left( \frac{a+b}{2},d\right) G\left( \frac{a+b}{2},d\right) %
\right]  \notag \\
&&+\frac{1}{3}\left[ F\left( \frac{a+b}{2},c\right) G\left( \frac{a+b}{2}%
,d\right) +F\left( \frac{a+b}{2},d\right) G\left( \frac{a+b}{2},c\right) %
\right] .  \notag
\end{eqnarray}

Adding (\ref{c16}), (\ref{c18}) and multiplying on both sides of the
resultnat one by $2,$ we get%
\begin{eqnarray}
&&8\left( \frac{a+b}{2}\right) G\left( \frac{a+b}{2}\right)  \label{c18} \\
&\supseteq &\frac{2}{b-a}\int_{a}^{b}F\left( x,\frac{c+d}{2}\right) G\left(
x,\frac{c+d}{2}\right) dx  \notag \\
&&+\frac{2}{d-c}\int_{c}^{d}F\left( \frac{a+b}{2},y\right) G\left( \frac{a+b%
}{2},y\right) dy  \notag \\
&&+\frac{1}{6}\left[ 2F\left( a,\frac{c+d}{2}\right) G\left( a,\frac{c+d}{2}%
\right) +2F\left( b,\frac{c+d}{2}\right) G\left( b,\frac{c+d}{2}\right) %
\right]  \notag \\
&&+\frac{1}{6}\left[ 2F\left( \frac{a+b}{2},c\right) G\left( \frac{a+b}{2}%
,c\right) +2F\left( \frac{a+b}{2},d\right) G\left( \frac{a+b}{2},d\right) %
\right]  \notag \\
&&+\frac{1}{3}\left[ F\left( a,\frac{c+d}{2}\right) G\left( b,\frac{c+d}{2}%
\right) +F\left( b,\frac{c+d}{2}\right) G\left( a,\frac{c+d}{2}\right) %
\right]  \notag \\
&&+\frac{1}{3}\left[ F\left( \frac{a+b}{2},c\right) G\left( \frac{a+b}{2}%
,d\right) +F\left( \frac{a+b}{2},d\right) G\left( \frac{a+b}{2},c\right) %
\right] .  \notag
\end{eqnarray}

Now from (\ref{k6}), we have
\begin{eqnarray}
&&2F\left( a,\frac{c+d}{2}\right) G\left( a,\frac{c+d}{2}\right)  \label{c19}
\\
&\supseteq &\frac{1}{d-c}\int_{c}^{d}F(a,y)G(a,y)dy  \notag \\
&&+\frac{1}{6}\left[ F(a,c)G(a,c)+F(a,d)G(a,d)\right]  \notag \\
&&+\frac{1}{3}[F(a,c)G(a,d)+F(a,d)G(a,c)],  \notag
\end{eqnarray}%
\begin{eqnarray}
&&2F\left( b,\frac{c+d}{2}\right) G\left( b,\frac{c+d}{2}\right)  \label{c20}
\\
&\supseteq &\frac{1}{d-c}\int_{c}^{d}F(b,y)G(b,y)dy  \notag \\
&&+\frac{1}{6}[F(b,c)G(b,c)+F(b,d)G(b,d)]  \notag \\
&&+\frac{1}{3}[F(b,c)G(b,d)+F(b,d)G(b,c)],  \notag
\end{eqnarray}%
\begin{eqnarray}
&&2F\left( \frac{a+b}{2},c\right) G\left( \frac{a+b}{2},c\right)  \label{c21}
\\
&\supseteq &\frac{1}{b-a}\int_{a}^{b}F(x,c)G(x,c)dx  \notag \\
&&+\frac{1}{6}[F(a,c)G(a,c)+F(b,c)G(b,c)]  \notag \\
&&+\frac{1}{3}[F(a,c)G(b,c)+F(b,c)G(a,c)],  \notag
\end{eqnarray}%
\begin{eqnarray}
&&2F\left( \frac{a+b}{2},d\right) G\left( \frac{a+b}{2},d\right)  \label{c23}
\\
&\supseteq &\frac{1}{b-a}\int_{a}^{b}F(x,d)G(x,d)dx  \notag \\
&&+\frac{1}{6}[F(a,d)G(a,d)+F(b,d)G(b,d)]  \notag \\
&&+\frac{1}{3}[F(a,d)G(b,d)+F(b,d)G(a,d)],  \notag
\end{eqnarray}%
\begin{eqnarray}
&&2F\left( a,\frac{c+d}{2}\right) G\left( b,\frac{c+d}{2}\right)  \label{c24}
\\
&\supseteq &\frac{1}{d-c}\int_{c}^{d}F(a,y)G(b,y)dy  \notag \\
&&+\frac{1}{6}[F(a,c)G(b,c)+F(a,d)G(b,d)]  \notag \\
&&+\frac{1}{3}[F(a,c)G(b,d)+F(a,d)G(b,c)],  \notag
\end{eqnarray}%
\begin{eqnarray}
&&2F\left( b,\frac{c+d}{2}\right) G\left( a,\frac{c+d}{2}\right)  \label{c25}
\\
&\supseteq &\frac{1}{d-c}\int_{c}^{d}F(b,y)G(a,y)dy  \notag \\
&&+\frac{1}{6}[F(b,c)G(a,c)+F(b,d)G(a,d)]  \notag \\
&&+\frac{1}{3}[F(b,c)G(a,d)+F(b,d)G(a,c)],  \notag
\end{eqnarray}%
\begin{eqnarray}
&&2F\left( \frac{a+b}{2},c\right) G\left( \frac{a+b}{2},d\right)  \label{c26}
\\
&\supseteq &\frac{1}{b-a}\int_{a}^{b}F(x,c)G(x,d)dx  \notag \\
&&+\frac{1}{6}[F(a,c)G(a,d)+F(b,c)G(b,d)]  \notag \\
&&+\frac{1}{3}[F(a,c)G(b,d)+F(b,c)G(a,d)],  \notag
\end{eqnarray}%
\begin{eqnarray}
&&2F\left( \frac{a+b}{2},d\right) G\left( \frac{a+b}{2},c\right)  \label{c27}
\\
&\supseteq &\frac{1}{b-a}\int_{a}^{b}F(x,d)G(x,c)dx  \notag \\
&&+\frac{1}{6}[F(a,d)G(a,c)+F(b,d)G(b,c)]  \notag \\
&&+\frac{1}{3}[F(a,d)G(b,c)+F(b,d)G(a,c)].  \notag
\end{eqnarray}

Using (\ref{c19})-(\ref{c27}) in (\ref{c18}), we have%
\begin{eqnarray}
&&8\left( \frac{a+b}{2}\right) G\left( \frac{a+b}{2}\right)  \label{c28} \\
&\supseteq &\frac{2}{b-a}\int_{a}^{b}F\left( x,\frac{c+d}{2}\right) G\left(
x,\frac{c+d}{2}\right) dx  \notag \\
&&+\frac{2}{d-c}\int_{c}^{d}F\left( \frac{a+b}{2},y\right) G\left( \frac{a+b%
}{2},y\right) dy  \notag \\
&&+\frac{1}{6(d-c)}\int_{c}^{d}F(a,y)G(a,y)dy+\frac{1}{6(d-c)}%
\int_{c}^{d}F(b,y)G(b,y)  \notag \\
&&+\frac{1}{6(b-a)}\int_{a}^{b}F(x,c)G(x,c)dx+\frac{1}{6(b-a)}%
\int_{a}^{b}F(x,d)G(x,d)dx  \notag \\
&&+\frac{1}{3(d-c)}\int_{c}^{d}F(a,y)G(b,y)dy+\frac{1}{3(d-c)}%
\int_{c}^{d}F(b,y)G(a,y)dy  \notag \\
&&+\frac{1}{3(b-a)}\int_{a}^{b}F(x,c)G(x,d)dx+\frac{1}{3(b-a)}%
\int_{a}^{b}F(x,d)G(x,c)dx  \notag \\
&&+\frac{1}{18}P(a,b,c,d)+\frac{1}{9}M(a,b,c,d)+\frac{2}{9}N(a,b,c,d).
\notag
\end{eqnarray}

Again from (\ref{k6}), we have following relations%
\begin{eqnarray}
&&\frac{2}{d-c}\int_{c}^{d}F\left( \frac{a+b}{2},y\right) G\left( \frac{a+b}{%
2},y\right) dy  \label{c29} \\
&\supseteq &\frac{1}{(b-a)(d-c)}\int_{a}^{b}\int_{c}^{d}F(x,y)G(x,y)dydx
\notag \\
&&+\frac{1}{6(d-c)}\int_{c}^{d}F(a,y)G(a,y)dy+\frac{1}{6(d-c)}%
\int_{c}^{d}F(b,y)G(b,y)dy  \notag \\
&&+\frac{1}{3(d-c)}\int_{c}^{d}F(a,y)G(b,y)dy+\frac{1}{3(d-c)}%
\int_{c}^{d}F(b,y)G(a,y)dy,  \notag
\end{eqnarray}%
\begin{eqnarray}
&&\frac{2}{b-a}\int_{a}^{b}F\left( x,\frac{c+d}{2}\right) G\left( x,\frac{c+d%
}{2}\right) dx  \label{c30} \\
&\supseteq &\frac{1}{(b-a)(d-c)}\int_{a}^{b}\int_{c}^{d}F(x,y)G(x,y)  \notag
\\
&&+\frac{1}{6(b-a)}\int_{a}^{b}F(x,c)G(x,c)dx+\frac{1}{6(b-a)}%
\int_{a}^{b}F(x,d)G(x,d)dx  \notag \\
&&+\frac{1}{3(b-a)}\int_{a}^{b}F(x,c)G(x,d)dx+\frac{1}{3(b-a)}%
\int_{a}^{b}F(x,d)G(x,c)dx.  \notag
\end{eqnarray}

Using (\ref{c29}) and (\ref{c30}) in (\ref{c28}), we have%
\begin{eqnarray}
&&8\left( \frac{a+b}{2}\right) G\left( \frac{a+b}{2}\right)  \label{c31} \\
&\supseteq &\frac{2}{(b-a)(d-c)}\int_{a}^{b}\int_{c}^{d}F(x,y)G(x,y)dydx
\notag \\
&&+\frac{1}{3(d-c)}\int_{c}^{d}F(a,y)G(a,y)dy+\frac{1}{3(d-c)}%
\int_{c}^{d}F(b,y)G(b,y)dy  \notag \\
&&+\frac{1}{3(b-a)}\int_{a}^{b}F(x,c)G(x,c)dx+\frac{1}{3(b-a)}%
\int_{a}^{b}F(x,d)G(x,d)dx  \notag \\
&&+\frac{2}{3(d-c)}\int_{c}^{d}F(a,y)G(b,y)dy+\frac{2}{3(d-c)}%
\int_{c}^{d}F(b,y)G(a,y)dy  \notag \\
&&+\frac{2}{3(b-a)}\int_{a}^{b}F(x,c)G(x,d)dx+\frac{2}{3(b-a)}%
\int_{a}^{b}F(x,d)G(x,c)dx  \notag \\
&&+\frac{1}{18}P(a,b,c,d)+\frac{1}{9}M(a,b,c,d)+\frac{2}{9}N(a,b,c,d)  \notag
\end{eqnarray}

and by using (\ref{k5}) on each integral in (\ref{c31}), we have our
required result.
\end{proof}

\begin{remark}
If $\overline{F}=\underline{F}$ in Theorem \ref{mt3}, then Theorem \ref{mt3}
reduces to the (\cite[Theorem 5]{AL}).
\end{remark}

\section*{Conclusion}

In this article, interval-valued co-ordinated convex function and double
integral for the interval-valued functions are introduced and establish some
new inequlities of Hermite-Hadamard type. Our inequalilities are the extend
some previously obtained results.

\bibliographystyle{Plain}
\bibliography{Dafang_2019_12_27}

\begin{thebibliography}{10}

\bibitem{Almori}
Mohammad Alomari and Maslina Darus.
\newblock Co-ordinated s-convex function in the first sense with some
  hadamard-type inequalities.
\newblock {\em Int. J. Contemp. Math. Sci}, 3(32):1557--1567, 2008.

\bibitem{Breckner}
Wolfgang~W Breckner.
\newblock Continuity of generalized convex and generalized concave set-valued
  functions.
\newblock {\em Rev. Anal. Num{\'e}r. Th{\'e}or. Approx.}, 22(1):39--51, 1993.

\bibitem{CAno}
Yurilev Chalco-Cano, Arturo Flores-Franulic, and Heriberto Rom{\'a}n-Flores.
\newblock Ostrowski type inequalities for interval-valued functions using
  generalized hukuhara derivative.
\newblock {\em Computational \& Applied Mathematics}, 31(3), 2012.

\bibitem{Cano1}
Yurilev Chalco-Cano, Weldon~A Lodwick, and W~Condori-Equice.
\newblock Ostrowski type inequalities and applications in numerical integration
  for interval-valued functions.
\newblock {\em Soft Computing}, 19(11):3293--3300, 2015.

\bibitem{costa}
TM~Costa.
\newblock Jensen's inequality type integral for fuzzy-interval-valued
  functions.
\newblock {\em Fuzzy Sets and Systems}, 327:31--47, 2017.

\bibitem{costa2}
TM~Costa and Heriberto Rom{\'a}n-Flores.
\newblock Some integral inequalities for fuzzy-interval-valued functions.
\newblock {\em Information Sciences}, 420:110--125, 2017.

\bibitem{Dragomir}
SS~Dragomir.
\newblock On the hadamard's inequlality for convex functions on the
  co-ordinates in a rectangle from the plane.
\newblock {\em Taiwanese Journal of Mathematics}, pages 775--788, 2001.

\bibitem{Dragomir1}
SS~Dragomir and CEM Pearce.
\newblock Selected topics on hermite-hadamard inequalities and applications,
  rgmia monographs, victoria university, 2000.
\newblock {\em ONLINE: http://rgmia. vu. edu. au/monographs}, 2004.

\bibitem{flo}
Arturo Flores-Franuli{\v{c}}, Yurilev Chalco-Cano, and Heriberto
  Rom{\'a}n-Flores.
\newblock An ostrowski type inequality for interval-valued functions.
\newblock In {\em 2013 Joint IFSA World Congress and NAFIPS Annual Meeting
  (IFSA/NAFIPS)}, pages 1459--1462. IEEE, 2013.

\bibitem{AL}
MA~Latif and M~Alomari.
\newblock Hadamard-type inequalities for product two convex functions on the
  co-ordinates.
\newblock In {\em Int. Math. Forum}, volume~4, pages 2327--2338, 2009.

\bibitem{Mitroi}
Flavia-Corina Mitroi, Kazimierz Nikodem, and Szymon Wasowicz.
\newblock Hermite--hadamard inequalities for convex set-valued functions.
\newblock {\em Demonstratio Mathematica}, 46(4):655--662, 2013.

\bibitem{Moore}
Ramon~E Moore.
\newblock {\em Interval analysis}.
\newblock Prentice-Hall, Englewood Cliffs, 1966.

\bibitem{Moore2}
Ramon~E Moore, R~Baker Kearfott, and Michael~J Cloud.
\newblock {\em Introduction to interval analysis}.
\newblock Siam, Philadelphia, P. A., 2009.

\bibitem{niko1}
Kazimierz Nikodem, Jose~Luis Sanchez, and Luisa Sanchez.
\newblock Jensen and hermite-hadamard inequalities for strongly convex
  set-valued maps.
\newblock {\em Mathematica Aeterna}, 4(8):979--987, 2014.

\bibitem{Gomez}
Rafaela Osuna-Gomez, Maria~Dolores Jimenez-Gamero, Yurilev Chalco-Cano, and
  Marko~Antonio Rojas-Medar.
\newblock Hadamard and jensen inequalities for s-convex fuzzy processes.
\newblock In {\em Soft methodology and random information systems}, pages
  645--652. Springer, Berlin/Heidelberg, 2004.

\bibitem{MZ}
M~Emin Ozdemir, Erhan Set, and Mehmet~Zeki Sar{\i}kaya.
\newblock Some new hadamard type inequalities for co-ordinated.
\newblock {\em Hacettepe Journal of Mathematics and Statistics},
  40(2):219--229, 2011.

\bibitem{Pecaric}
Josip~E Peajcariaac and Yung~Liang Tong.
\newblock {\em Convex functions, partial orderings, and statistical
  applications}.
\newblock Academic Press, Bostan SanDiego New York London Sydney Tokyo Toronto,
  1992.

\bibitem{flo2}
H~Rom{\'a}n-Flores, Y~Chalco-Cano, and WA~Lodwick.
\newblock Some integral inequalities for interval-valued functions.
\newblock {\em Computational and Applied Mathematics}, 37(2):1306--1318, 2018.

\bibitem{flo3}
Heriberto Rom{\'a}n-Flores, Yurilev Chalco-Cano, and Geraldo~Nunes Silva.
\newblock A note on gronwall type inequality for interval-valued functions.
\newblock In {\em 2013 Joint IFSA World Congress and NAFIPS Annual Meeting
  (IFSA/NAFIPS)}, pages 1455--1458. IEEE, 2013.

\bibitem{Sado}
Elzbieta Sadowska.
\newblock Hadamard inequality and a refinement of jensen inequality for set
  valued functions.
\newblock {\em Results in Mathematics}, 32(3-4):332--337, 1997.

\bibitem{Zhao}
Dafang Zhao, Tianqing An, Guoju Ye, and Wei Liu.
\newblock New jensen and hermite--hadamard type inequalities for h-convex
  interval-valued functions.
\newblock {\em Journal of Inequalities and Applications}, 2018(1):302, 2018.

\bibitem{zhao3}
Dafang Zhao, Tianqing An, Guoju Ye, and Wei Liu.
\newblock Chebyshev type inequalities for interval-valued functions.
\newblock {\em Fuzzy Sets and Systems}, 2019.

\bibitem{zhao2}
Dafang Zhao, Guoju Ye, Wei Liu, and Delfim~FM Torres.
\newblock Some inequalities for interval-valued functions on time scales.
\newblock {\em Soft Computing}, 23(15):6005--6015, 2019.

\end{thebibliography}

\end{document}